\documentclass[onefignum,onetabnum]{siamart171218}
\usepackage{subcaption}
\usepackage{graphicx}
\usepackage{diagbox}
\usepackage{cite}
\usepackage{hyperref}
\usepackage{tablefootnote}

\usepackage{amsmath,amssymb,amsfonts}
\usepackage{algpseudocode}
\usepackage{algorithm}
\usepackage{algorithmicx}
\usepackage{multirow}

\setlength{\topmargin}{-.5in}
\setlength{\textheight}{9in}
\setlength{\oddsidemargin}{.125in}
\setlength{\textwidth}{6.25in}

\newcommand{\eps}{\varepsilon}
\newcommand{\tv}{\tilde{V}}
\newcommand{\ts}{\tilde{S}}

\newcommand{\tl}{\tilde{L}}
\newcommand{\tu}{\tilde{U}}
\newcommand{\kb}{\tilde{\kappa}_F(B)}
\newcommand{\kbp}{\tilde{\kappa}_F^p(B)}

\newcommand{\bv}{\bar{V}}
\newcommand{\bl}{\bar{L}}

\newcommand{\vect}{\mathbf}
\newcommand{\ignore}[1]{}

\title{Neumann Series in GMRES and Algebraic Multigrid Smoothers}

\author{
Stephen~Thomas%
\thanks{National Renewable Energy Laboratory, Golden, Colorado}
\and Arielle Carr\thanks{Lehigh University, Bethlehem, PA}
\and Paul Mullowney%
\footnotemark[1]
\and Ruipeng Li%
\thanks{Lawrence Livermore National Laboratory, Livermore CA}
\and Kasia~\'{S}wirydowicz%
\thanks{Pacific Northwest National Laboratory, Richland, WA}
}

\begin{document}

\maketitle

\begin{abstract}
Neumann series underlie both Krylov methods and algebraic multigrid 
smoothers. A low-synchronization modified Gram-Schmidt (MGS)-GMRES
algorithm is described that employs a Neumann series 
to accelerate the projection step.
A corollary to the backward stability result of Paige et al.~\cite{2006--simax--paige-rozloznik-strakos} demonstrates that the 
truncated Neumann series approximation is sufficient for convergence of 
GMRES under standard conditions. The lower triangular solver associated
with the correction matrix $T_m = (\: I + L_m \:)^{-1}$
may then be replaced by a matrix-vector product with $T_m = I - L_m$.
Next, Neumann series are applied to accelerate
the classical R\"uge-Stuben algebraic multigrid (AMG) preconditioner 
using either a polynomial Gauss-Seidel or incomplete ILU smoother.
Here, the sparse triangular solver employed 
in the Gauss-Seidel and ILU smoothers is replaced
by an inner iteration based upon matrix-vector products.
Henrici's departure from normality of the associated iteration matrices
leads to a better understanding of these series.  
Additionally, connections are made between the (non)normality of 
the $L$ and $U$ factors and nonlinear stability analysis, 
as well as the pseudospectra of the coefficient matrix.  
Furthermore, re-orderings that preserve structural symmetry also reduce 
the departure from normality of the upper triangular factor and improve 
the relative residual of the triangular solves.
To demonstrate the effectiveness of this approach on
many-core architectures, the proposed solver and preconditioner 
are applied to the pressure continuity equation for the
incompressible Navier-Stokes equations of fluid motion.
The pressure solve time is reduced considerably without a change 
in the convergence rate and the polynomial Gauss-Seidel smoother
is compared with a Jacobi smoother. Numerical and timing results are
presented for Nalu-Wind and the PeleLM combustion codes, where ILU
with iterative triangular solvers is shown 
to be much more effective than polynomial Gauss-Seidel.
\end{abstract}
   
\section{Introduction}
The purpose of the present work is to show the important role
Neumann series play in MGS-GMRES and C-AMG smoothers. By
revealing this structure and analyzing it, we devise ways
to speed-up the GMRES+AMG solver in hypre \cite{Falgout2004}.  
The generalized minimal residual (GMRES) Krylov subspace method \cite{Saad86}
is often employed to solve the large linear systems arising in 
high-resolution physics based simulations using the incompressible
Navier-Stokes equations in fluid mechanics. \'{S}wirydowicz et al.
\cite{2020-swirydowicz-nlawa} recently improved the parallel 
strong-scaling of the algorithm by reducing the 
MPI communication requirements to a minimum, 
while maintaining the numerical stability and robustness
of the original algorithm. In order to achieve fast convergence
of an elliptic solver, such as the pressure continuity equation,
the classical Ruge-St\"uben \cite{Ruge1987} algebraic multigrid 
(C-AMG, or simply AMG) solver is applied as a preconditioner. 
The low-synchronization MGS-GMRES iteration, Gauss-Seidel and ILU
smoothers employ direct triangular solvers.\footnote{Note that the phrase {\it direct triangular solver} 
is used to differentiate between 
the iterative approximations versus the traditional 
forward and backward recurrences.} 
The triangular solve in GMRES is relatively small 
and local to each MPI rank. For Gauss-Seidel the coarse level
matrices in the AMG $V$-cycle
are distributed and some communication is required. 
Triangular solvers are, in general, difficult 
to implement in parallel on many-core architectures. In this study, 
the associated iteration matrices are replaced by truncated Neumann 
series expansions
leading to polynomial-type smoothers.
They result in a highly efficient and backward stable approach for 
Exascale class supercomputers.

Consider the large, sparse linear systems arising from the 
discretization of the incompressible Navier-Stokes pressure 
continuity equation \cite{Ashesh2021}.
In the present study, linear systems of the form $A\vect{x}=\vect{b}$ with $A$ an $n \times n$ real-valued matrix, are solved with Krylov subspace methods
using one $V$-cycle of the C-AMG  algorithm as the 
preconditioner.  Here, let 
$\vect{r}^{(0)} = \vect{b} - A\vect{x}^{(0)}$ 
denote the initial residual with initial guess $\vect{x}^{(0)}$.
Inside GMRES, the Arnoldi $QR$ algorithm is applied to generate an orthonormal
basis for the Krylov subspace 
${\cal K}_m(B)$ spanned by the columns of the $n\times m$ matrix, $V_m$, 
where $m \ll n$,
and produces the $(m+1)\times m$ Hessenberg matrix, $H_{m+1,m}$, 
in the Arnoldi expansion such that%
\[
AV_m = V_{m+1}H_{m+1,m}.
\]
In particular, the Arnoldi algorithm produces a $QR$ factorization of 
$B = [\:\vect{r}^{(0)},\:AV_m\:]$ and the columns of $V_{m+1}$ 
form an orthogonal basis for the Krylov subspace ${\cal K}_{m+1}(B)$ \cite{2006--simax--paige-rozloznik-strakos}. 
The orthogonality of the columns
determines the convergence of Krylov methods for linear system solvers. 
However, in finite-precision arithmetic, $V_m$ may 
``lose'' orthogonality and this loss, 
as measured by $\|I - V_m^TV_m\|_F$, 
may deviate substantially from machine precision, ${\cal
O}(\eps)$, (see Paige and Strakos \cite{Paige2002}).  
When linear independence is completely lost, the Krylov iterations
may fail to converge. For example, the MGS-GMRES iteration will stall 
in this case, and this occurs when $\|S_m\|_2 = 1$,
where the matrix $S_m = (I+U_m)^{-1}U_m$ was introduced in the seminal backward stability 
analysis of Paige et al.~\cite{2006--simax--paige-rozloznik-strakos}. 
Here, $U_m$ is the $m \times m$ strictly upper triangular part of $V_m^TV_m$.

The development of low-synchronization Gram-Schmidt and generalized 
minimal residual algorithms by  
\'{S}wirydowicz et al.~\cite{2020-swirydowicz-nlawa}  and 
Bielich et al.~\cite{DCGS2Arnoldi} was largely 
driven by applications that need stable, yet scalable solvers. 
Both the modified (MGS) and classical Gram-Schmidt with 
re-orthogonalization 
(CGS2) are stable algorithms for a GMRES solver. Indeed,
CGS2 results in an ${\cal O}(\eps)$ loss of orthogonality, 
which suffices for GMRES to converge.
Paige et al.~\cite{2006--simax--paige-rozloznik-strakos} show that despite
${\cal O}(\eps)\kappa(B)$ loss of orthogonality, MGS-GMRES is backward
stable for the solution of linear systems. Here, the condition number
of the matrix $B$ is given by
$\kappa(B) = \sigma_{\max}(B)/\sigma_{\min}(B)$, where $\sigma_{\max}(B)$ and $\sigma_{\min}(B)$ are
the maximum and minimum singular values of the matrix $B$, respectively.

An inverse compact $WY$ (ICWY) modified Gram-Schmidt algorithm is 
presented in~\cite{2020-swirydowicz-nlawa} and is 
based upon the application of a projector
\[
P = I - V_m \: T_m \: V_m^T, \quad T = (\: V_m^TV_m \:)^{-1}
\]
where $V_m$ is again $n\times m$, $I$ is the identity matrix of dimension $n$,
and $T_m$ is an $m\times m$ correction matrix.
To obtain a one-reduce MGS algorithm, or one MPI global 
reduction per GMRES iteration, the normalization
is delayed to the next iteration. The correction matrix
$T_m$ is obtained from  
the strictly lower triangular part of $V_m^TV_m$, denoted $L_m$.  Note that 
because $V_m$ has almost orthonormal columns, the norm of $L_m$ is small, 
and $T_m$ is close to $I$ (here, the identity matrix of dimension $m$).

In this work, an alternative to computing the inverse of $T_m$ 
via triangular solve is developed. 
A Neumann series expansion for the inverse of the lower triangular 
correction matrix, $T_m$, results in the
ICWY representation of the projector $P$ for
the modified Gram-Schmidt factorization of the Arnoldi matrix $B$. This is written as
\begin{equation}\label{eq:GS}
P = I - V_m \: T \: V_m^T, \quad
T_m = (\: I + L_{m} \:)^{-1}  = I - L_{m} + L_{m}^2 - \cdots + L_{m}^p,
\end{equation}
where the columns of $L_m$ are defined by the matrix-vector products $V_{m-2}^T\vect{v}_{m-1}$.
The sum is finite because the matrix $L_m$ is
nilpotent, as originally noted by Ruhe \cite{Ruhe83}. 

In this paper, a corollary
to the backward stability result of Paige et al.~\cite{2006--simax--paige-rozloznik-strakos} 
and Paige and Strako\v{s} \cite{Paige2002} is presented that
demonstrates  matrix-vector multiplication by
$T_m=I-L_m$ in the projection step is sufficient for convergence of MGS-GMRES when 
$\|L_m\|_F^p = {\cal O}(\eps^p)\kappa_F^p(B)$, where $p > 1$. 
A new formulation of GMRES based upon the
truncated Neumann series for the matrix $T_m$ is
presented. In particular, the loss of orthogonality and
backward stability of the truncated and MGS-GMRES algorithms
are the same because of the above corollary.
For extremely ill-conditioned matrices, the convergence history of
the new algorithm has been found to be identical to
the original GMRES algorithm introduced by Saad and Schultz ~\cite{Saad86}.
In particular, convergence is achieved when the 
norm-wise relative backward error reaches machine precision (NRBE); see Section \ref{sec:lowsynch} for more details. For this reason, 
Paige and Strako\v{s} \cite{Paige2002}
recommended that the NBRE be applied as the stopping criterion.

A polynomial Gauss-Seidel iteration 
can also be interpreted with respect to the
degree-$p$ Neumann series expansion as described in the recent
paper by Mullowney  et al.~\cite{Mullowney2021}.
Given the regular matrix splitting 
$A = M - N$, $A = D + L + U$, $M = D + L$ and $N = - U$,\footnote{Note that 
by convention, this matrix splitting is written such that $L$ is the strictly 
lower triangular part of $A$, and $U$ is the strictly upper triangular part.  The 
matrices $L$ and $U$ in 
Gauss-Seidel smoother 
are different from the those used in Gram-Schmidt projection in (\ref{eq:GS}).} the
polynomial Gauss-Seidel iteration is given by
\[
\vect{x}^{(k+1)} := \vect{x}^{(k)} + (I+D^{-1} L)^{-1}\:D^{-1} \vect{r}^{(k)} = 
\vect{x}^{(k)} + \sum_{j=0}^{p}
(-D^{-1} L)^j D^{-1} \vect{r}^{(k)} 
\]
For the ILU factorization \cite{ChowSaad1997}, 
the $L = I + L_s$ factor has a unit diagonal and strictly 
lower triangular part $L_s$. Thus, the inverse $L^{-1}$ may be expressed as
a Neumann series and, with $L_s$ nilpotent, this is a finite sum. Similarly,
for the strictly upper triangular factor $U_s$ such that $U = I + U_s$, after either row  or row/column scaling, 
the inverse can be written as a Neumann series, as follows
\[
(\: I + U_s \:)^{-1} = I - U_s + U_s^2 - \cdots
\]
Furthermore, when $\|U_s^p\|_2 < 1$, for $p\ge 1$ the 
series converges. In the present study, the above iterations are 
applied as smoothers when constructing a C-AMG preconditioner.

In \cite{Thomas2021}, a method for scaling the $U$ factor to form $U = I + U_s$
in order to mitigate a high degree of
non-normality and condition number is proposed.  
In general, fast convergence of the Jacobi 
iteration can then be expected \cite{Anzt2015} when 
iteratively computing the triangular solution, thus
avoiding the comparatively high cost on the GPU of 
a direct triangular solve.  Here, the analysis in
\cite{Thomas2021} is extended and theoretical upper bounds on the departure from normality
\cite{Henrici1962} of a (row and/or column) scaled triangular matrix are provided.  In the case of
nonsymmetric coefficient matrices, reordering prior to scaling may be employed to enforce a 
symmetric sparsity pattern.  Following the analysis in \cite{ChowSaad1997}, an empirical 
relationship between the drop tolerance and fill level used when computing an incomplete 
$LU$ factorization and the subsequent departure from normality of the  triangular factors 
following row and/or column scaling is established.  

An additional contribution of our paper is to combine three different 
Neumann series to create fast and robust solvers. The time to solution 
is the most important metric versus the number of solver iterations taken
in fluid mechanics simulations. 
The low-synchronization MGS-GMRES may take a larger
number of iterations at little or no extra cost if the preconditioner
is less expensive. However, the triangular solvers employed
by smoothers are not efficient on multi-core architectures. Indeed,
the sparse matrix-vector product (SpMV) is 25--50$\times$ faster
on GPU architectures \cite{Anzt2015}.
This observation prompted the introduction of polynomial 
AMG smoothers in~\cite{Mullowney2021}, 
which significantly lowers the cost of the $V$-cycle. 

The paper is organized as follows. Low synchronization and generalized
minimal residual algorithms are discussed in Section 2. A corollary to
Paige et al. \cite{2006--simax--paige-rozloznik-strakos} in Section 3
establishes that $T_m = I-L_m$ is sufficient for convergence.
In Section 4, a variant of MGS-GMRES is presented that uses this $T_m$.
Section 5 describes recent developments for hypre on GPUs.
The polynomial Gauss-Seidel and ILU smoothers are then introduced in 
Section 5 where it is shown that pivoting leads to 
rapid convergence of iterative triangular solvers.
The departure from normality of the factors can also be
mitigated by either row or row and column scaling
via the Ruiz algorithm, thus avoiding divergence of the iterations.

{\it Notation} Lowercase bold letters denote a column vector 
and uppercase letters are matrices (e.g., $\vect{v}$ and $A$,
respectively).  $a_{ij}$ represents the $(i,j)$ scalar entry 
of a matrix $A$, and $\vect{a}_j$ denotes the $j^{th}$ column of $A$.
Where appropriate, for a matrix $A$, $A_j$ is the $j$--th column.
Superscripts indicate the approximate solution 
(e.g., $\vect{x}^{(k)}$) and corresponding residual (e.g.,
$\vect{r}^{(k)}$) of an iterative method at step $k$. Throughout this
paper, we will explicitly refer to {\it strictly upper/lower triangular
matrices} and use the notation $U_k$ (or $L_k$) and $U_s$ 
(or $L_s$).\footnote{We note that the distinction between these 
two notations in crucial.  For $U_k$, the size of the strictly upper
triangular matrix changes with $k$, whereas the size of $U_s$ remains
fixed.}  Vector notation indicates a subset of the rows
and/or columns of a matrix; e.g., $V_{1:k+1,1:k}$ denotes the first 
$k+ 1$ rows and $k$ columns of the matrix $V$ and the
notation $V_{:,1:k}$ represents the entire row of the first $k$ columns 
of $V$.  $H_{m+1,m}$ represents an $(m+1)\times m$ matrix, and in particular 
$H$ refers to a Hessenberg matrix.  In cases where standard notation in 
the literature is respected that may otherwise conflict with the aforementioned notation, 
this will be explicitly indicated.

\section{Low-Synchronization Gram-Schmidt Algorithms}

Krylov linear system solvers are often required for extreme
scale physics simulations on parallel
machines with many-core accelerators.  
Their strong-scaling is limited by the number and frequency of
global reductions in the form of {\tt MPI\_AllReduce} and these communication
patterns are expensive.  
Low-synchronization algorithms are
designed such that they require only one reduction per iteration 
to normalize each vector and apply projections. 

A review of compact $WY$ Gram Schmidt algorithms and their computational
costs is given in~\cite{DCGS2Arnoldi}.
The ICWY form for MGS is the lower triangular matrix $T_k=(I+L_k)^{-1}$  
and was derived in \'{S}wirydowicz et al.~\cite{2020-swirydowicz-nlawa}. 
Here, $I$ denotes the identity matrix and $L_k$ is strictly lower triangular.
Specifically, these algorithms batch the inner-products together and 
compute one row of $L_k$ as
\begin{equation}\label{eq:matvec}
L_{k-1,1:k-2} = (\:V_{k-2}^T\:\vect{v}_{k-1}\:)^T.  
\end{equation}
The resulting ICWY projector $P$ is given by
\[
P =  I - V_{k-1}\:T_{k-1}\:V_{k-1}^T, \quad 
T_{k-1} = (\:I + L_{k-1}\:)^{-1}
\]
The implied triangular solve requires an additional $(k-1)^2$ flops at
iteration $k-1$ and thus leads to a slightly higher operation count 
compared to the original MGS algorithm.  The matrix-vector
multiply in (\ref{eq:matvec}) increases the complexity by $mn^2$  ($3mn^2$ total) 
but decreases the number of global reductions from $k-1$ at iteration $k$ to 
only one reduction when combined with the lagged normalization of a Krylov
vector. 


\section{Loss of Orthogonality}

When the Krylov vectors are orthogonalized via the finite 
precision MGS algorithm, and their loss of orthogonality is related in a 
straightforward way to the convergence of GMRES. 
In particular, orthogonality among the Krylov 
vectors is effectively maintained until the norm-wise relative 
backward error approaches the machine precision as discussed
in Paige and Strako\v{s} \cite{Paige2002}.
In this section, a corollary to 
Paige et al.~\cite{2006--simax--paige-rozloznik-strakos} 
establishes that 
$T_k = (I + L_{k})^{-1} = I - L_{k} + {\cal O}(\eps^2)\kappa^2(B)$,
where $B = [\: \vect{r}^{(0)}, AV_k\:]$ for the ICWY MGS formulation 
of the Arnoldi expansion.

Let $A$ be an $n\times n$ real-valued matrix, and consider the
Arnoldi factorization of the matrix $B$.
After $k$ steps, in exact arithmetic, the algorithm
produces the factorization 
\[
AV_{k} = V_{k+1}\:H_{k+1,k}, \quad V_{k+1}^TV_{k+1} = I_{k+1}
\]
where $H_{k+1,k}$ is an upper Hessenberg matrix. 
When applied to the linear system $A\vect{x} = \vect{b}$, 
assume $\vect{x}^{(0)} = 0$, $\vect{r}^{(0)} = \vect{b}$, 
$\|\vect{b}\|_2 = \rho$ and $\vect{v}_1 = \vect{b} / \rho$.
The Arnoldi algorithm produces an orthogonal basis for the Krylov vectors 
spanned by the columns of the matrix $V_{k}$.
Following the notation in \cite{2006--simax--paige-rozloznik-strakos}, 
the notation $\overline{A}_{k}$ is employed to represent the computed matrix in 
finite-precision arithmetic with $k$ columns, and $\tilde{A}_{k}$ to represent 
the properly normalized matrix with $k$ columns.  We refer the reader to
\cite{2006--simax--paige-rozloznik-strakos} for more details on these definitions.  
Consider the computed matrix $\bv_k$ with Krylov vectors as columns.  The
strictly lower triangular matrix $\bl_{k}$ is obtained from the loss of
orthogonality relation
%
\[
\bv_{k}^T\bv_{k} = I + \bl_{k} + \bl_{k}^T.
\]
%
Let us first consider the lower triangular solution algorithm for $T_k$
where the elements of $L_{k}$ appear in the correction matrix $T_k$, along 
with higher powers of the inner products 
in the Neumann series. 


A corollary to the backward stability results
from Paige and Strako\v{s} \cite{Paige2002}, and 
Paige et al.~\cite{2006--simax--paige-rozloznik-strakos} 
is now established,
namely that the Neumann series for $T_k$ may be truncated
according to
\[
T_k = (\: I + \tl_{k} \:)^{-1} = I - \tl_{k} + {\cal O}(\eps^2)\kappa^2(B).
\]
The essential result is based on the $QR$ factorization of the matrix
\[
B = [\: \vect{r}^{(0)}, \: AV_k \: ] = V_{k+1}
\left[ \: \vect{e}_1 \rho, \: H_{k+1,k} \: \right].
\]
A bound on the loss-of-orthogonality given by
\[
\|\:I - \tv_{k+1}^T\tv_{k+1}\:\|_F \le \kappa (\:
[ \: \vect{r}^{(0)}, \: AV_k \:]\:)\:{\cal O}(\eps),
\]
and the derivation of this result allows us to find an upper
bound for $\|\tl_k\|_F^p$.

The upper triangular structure of the loss of orthogonality
relation is revealed to be
\[
\tu_k = (\: I - \ts_k \: )^{-1}\: \ts_k = \ts_k\: (\: I - \ts_k \:)^{-1}
\]
and  it follows that $\tl_k = \tu_k^T$,
\begin{eqnarray*}
I - \ts_k^T & = & I - \tl_k\:( \: I + \tl_k\: )^{-1} \\
            & = & I - \tl_k \: (\: I - \tl_k + \tl_k^2 - \tl_k^3 + \cdots \:) \\
            & = & ( I + \tl_k )^{-1}
\end{eqnarray*}
where $\tu_k = \tl_k^T$ is the strictly upper triangular part of $\tv^T\tv$.
It then follows immediately, that a bound on the loss of orthogonality is given by
\[
\sqrt{2} \|\tv_k^T\tilde{\vect{v}}_k\|_2 \le \| I - \tv_k^T\tv_k \|_2 = \sqrt{2} \|(\:I - \ts_k\:)^{-1}\ts_k\:\|_F
\le \frac{4}{3}(2m)^{1/2}\hat{\gamma}_n\tilde{\kappa}_F(B)
\]
where
\[
\hat{\gamma}_n = \frac{\tilde{c}_n\eps}{1 - \tilde{c}_n\eps}, \quad
\tilde{\kappa}_F(B) = \min_{{\rm diag} D>0} \|AD\|_F / \sigma_{\min} (AD)
\]
The matrix $D$ is defined in \cite{2006--simax--paige-rozloznik-strakos}
to be {\em any} positive definite diagonal matrix. Therefore, it follows that
\[
\|\tu_k\|_F = \|\tl_k^T\|_F \le {\cal O}(\eps)\kb, \quad \|\tl_k\|_F^p \le {\cal O}(\eps^p) \kbp
\]
and thus the matrix inverse from the Neumann series is given by
\begin{eqnarray*}
T_k = (\: I + \tl_k \: )^{-1} & = & I - \tl_k + \tl_k^2 - \tl_k^3 + \cdots \\
                        & = & I - \tl_k + {\cal O}(\eps^2)\kappa^2(B)
\end{eqnarray*}

The growth of the condition number above is related to the norm-wise relative backward error
\[
\beta(\vect{x}^{(k)}) = \frac{\|\vect{r}^{(k)}\|_2}{\|\vect{b}\|_2 + \|A\|_\infty\|\vect{x}^{(k)}\|_2}
\]
and in particular, 
%
$\beta(\vect{x}^{(k)}) \: \kappa( \left[ \: \vect{r}^{(0)}, \: AV_k \: \right] ) = {\cal O}(1)$
%

\section{Low-synchronization MGS-GMRES}\label{sec:lowsynch}

The MGS--GMRES orthogonalization algorithm is the $QR$ factorization of
a matrix $B$ formed by adding a new column to $V_k$ in each iteration
\[
\left[ \begin{array}{cc} \vect{r}^{(0)}, & AV_m\end{array} \right] =
V_{m+1} \left[ \begin{array}{cc} \|\vect{r}^{(0)}\|\vect{e}_1, & H_{m+1,m} \end{array} \right].
\]
We remind the reader that bold-face with
superscripts  (e.g., $\vect{r}^{(0)}$) denote the residual in the GMRES algorithm 
and subscripting (e.g., $r_{1:i+1,i+2}$) denotes the corresponding element in the upper
triangular matrix $R$.  

The MGS-GMRES algorithm was proven to be backward stable for the solution of 
linear systems $A\vect{x} = \vect{b}$ 
in  \cite{2006--simax--paige-rozloznik-strakos} and 
orthogonality is maintained to ${\cal O}(\eps)\kappa(B)$, 
depending upon the condition number of the matrix $B = \kappa([\vect{r}^{(0)},\:AV_k])$.  
The normalization of the Krylov vector $\vect{v}_{i+1}$ 
at iteration $i+1$ represents
the delayed scaling of the vector $\vect{v}_{i+2}$ in the
matrix-vector product $\vect{v}_{i+2} = A\vect{v}_{i+1}$. Therefore,
an additional Step 8 is required in the one-reduce 
Algorithm \ref{fig:onesynch},
$r_{1:i+1,i+2} = r_{1:i+1,i+2} / r_{i+1,i+1}$ 
and $\vect{v}_{i+1} = \vect{v}_{i+1} / r_{i+1,i+1}$.
The diagonal element of the $R$ matrix, corresponds to $H_{i+1,i}$,
in the Arnoldi $QR$ factorization of the matrix
$B$, and is updated after the MGS projection in
Step 12 of the GMRES Algorithm \ref{fig:onesynch}.
The higher computational speed on a GPU is achieved when the 
correction matrix is replaced by $T_{i+1} = (\:I - L_{i+i}\:)$ in Step 11 of the 
GMRES Algorithm \ref{fig:onesynch}, resulting in  a matrix-vector multiply
\begin{algorithm}[htb!]
\centering
\begin{algorithmic}[1]
\Statex{{\bf Input:} Matrix $A$; right-hand side vector $\vect{b}$; initial guess vector $\vect{x}^{(0)}$} 
\Statex{{\bf Output:} Solution vector $\vect{x}$} 
  \State{$\vect{r}^{(0)} = \vect{b} - A\vect{x}^{(0)}$, $\vect{v}_1 = \vect{r}^{(0)}$.}
  \State{$\vect{v}_{2} = A\vect{v}_{1}$ } 
  \State{$(\: V_{2},\: R,\: L_2\: ) = {\rm mgs} (\: V_{2},\: R,\: L_1\: )$ }
  \For {$i=1, 2, \ldots, m$}
     \State{$\vect{v}_{i+2} = A\vect{v}_{i+1}$ \Comment{Matrix-vector product}} 
     \State{$ [\: L_{:,i+1}^T,\: \vect{r}_{i+2}\:] =  V_{i+1}^T [\vect{v}_{i+1}\, \vect{v}_{i+2}]$ \Comment{Global synchronization} }
     \State{$r_{i+1,i+1} = \|\vect{v}_{i+1}\|_2$ }
     \State{$\vect{v}_{i+1} = \vect{v}_{i+1}/r_{i+1,i+1}$ \Comment{Lagged normalization}}
     \State{$r_{1:i+1,i+2} = r_{1:i+1,i+2}/r_{i+1,i+1}$ \Comment{Scale for Arnoldi}}
     \State{$L_{:,i+1}^T = L_{:,i+1}^T/r_{i+1,i+1}$  }
     \State{$r_{1:i+1,i+2} = T_{i+1}\:r_{1:i+1,i+2}$ \Comment{ Projection Step}}
     \State{$\vect{v}_{i+2} = \vect{v}_{i+2} - V_{i+1} \: r_{1:i+1,i+2}$ }
     \State{$H_{i} = \vect{r}_{i+1}$}
     \State{Apply Givens rotations to $H_{i}$ }
  \EndFor
  \State{$\vect{y}_m = {\rm argmin} \|(\: H_{m}\vect{y}_m - \|\vect{r}_0\|_2\vect{e}_1 \:) \|_2$ }
  \State{$\vect{x} = \vect{x}^{(0)} + V_m\vect{y}_m$ }            
\end{algorithmic}
\caption{\label{fig:onesynch} Truncated Neumann Series MGS-GMRES}
\end{algorithm}

The most common convergence criterion when solving linear systems 
of the form $A\vect{x} = \vect{b}$ in existing iterative
solver frameworks is based upon the relative residual,
\begin{equation}
\frac{\|\vect{r}^{(k)}\|_2}{\|\vect{b}\|_2} =
\frac{\|\vect{b} - A\vect{x}^{(k)}\|_2}{\|\vect{b}\|_2} < tol,
\label{eq:relres}
\end{equation}
where $tol$ is some user-defined convergence tolerance. 
However, when the columns of $V_k$ become linearly dependent,
as indicated by $\|S_k\|_2=1$, the orthogonality of the Krylov
vectors is completely lost. Then the convergence of MGS-GMRES
flattens or stalls at this iteration. Due to the relationship 
with the backward error for solving
linear systems $A\vect{x} = \vect{b} $, elucidated 
by Prager and Oettli \cite{Prager64}
and Rigal and Gaches \cite{Rigal67}, the backward stability
analysis of Paige et al.~\cite{2006--simax--paige-rozloznik-strakos}
relies instead upon the norm-wise relative backward error (NRBE)
reaching machine precision.

\section{Algebraic Multigrid Preconditioner}

Neumann series also naturally arise when employing certain smoothers in AMG, and in particular, polynomial Gauss-Seidel and ILU smoothers.  Before analyzing this, necessary background information on AMG is first provided.

AMG \cite{Ruge1987} is an  effective method for solving  
linear systems of equations. 
In theory, AMG can solve a linear system with $n$
unknowns in ${\cal O}(n)$ operations, and even though it was originally developed as a solver, it is now common practice to use AMG as
a preconditioner to a Krylov method; in particular, when such Krylov solvers are applied
to large-scale linear systems arising in physics-based simulations.
As either a solver or preconditioner, an AMG method accelerates the solution of a linear system $A\vect{x} = \vect{b}$ 
through error reduction by using a sequence of coarser matrices 
called a {\em hierarchy}.  
These matrices will be denoted $A_{k}$, where
$k=0\dots m$, and $A_0 = A$.  Each $A_{k}$ has
dimensions $m_k \times m_k$ where $m_k > m_{k+1}$ for $k<m$.  
For the purposes of this paper, assume that
\begin{equation}
A_k=R_k A_{k-1} P_k\,,
\end{equation}
for $k>0$, where $P_k$ is a rectangular matrix with dimensions $m_{k-1} \times
m_k$.  $P_k$ is referred to as a {\em prolongation matrix} or {\em
prolongator}. $R_k$ is the {\em restriction matrix} and $R_k = P_k^T$ in the
Galerkin formulation of AMG.  Associated with each $A_k$, $k<m$, is a solver
called a {\em smoother}, which is usually a stationary iterative method,
e.g. Gauss-Seidel, polynomial, or incomplete factorization.  
The {\em coarse solver} used for $A_m$ (the lowest level in the hierarchy) 
is often a direct solver,  although it may be an
iterative method if $A_m$ is singular.

The setup phase of AMG is nontrivial for several reasons.
Each prolongator $P_k$ is developed algebraically from $A_{k-1}$.
Once the transfer matrices are determined, the
coarse-matrix representations are computed recursively from $A$ through
sparse triple-matrix multiplication.
This describes a $V$-cycle, the simplest complete AMG cycle; refer to
Algorithm 1 in the recent paper by Thomas et al. \cite{Thomas2021}
for a complete description.  AMG methods achieve
optimality (constant work per degree of freedom in $A_0$) through 
error reductions by the smoother and corrections propagated from
coarser levels.


\subsection{Ruge--St\"uben Classical AMG}

\label{sec:camg}
We now give an overview of classical Ruge-St\"{u}ben AMG, starting
with notation. Interpolation is formulated in terms of matrix operations.
Point $j$ is a neighbor of $i$ if and only if there is a non-zero element
$a_{ij}$ of the matrix $A$.
Point $j$ strongly influences $i$ if and only if
\begin{equation}
|a_{ij}| \ge \theta \max_{k\ne i}\:|\:a_{ik}\:| \,,
\end{equation}
where $\theta$ is the strength of connection threshold, $0 < \theta \le 1$. 
This strong influence relation is used to select coarse points. 
These points are retained in the next coarser level, and the remaining fine points
are dropped. Let $C_k$ and $F_k$ be the coarse and fine points selected at
level $k$, and let $m_k$ be the number of grid points at level $k$ ($m_0 = n$).
Then, $m_k = |C_k| + |F_k|$, $m_{k+1} = |C_k|$, $A_k$ is a $m_k\times m_k$
matrix, and $P_k$ is an $m_{k-1}\times m_k$ matrix. Here, the coarsening is
performed row-wise by interpolating between coarse and fine points.
The coarsening generally attempts to fulfill two contradictory criteria.
In order to ensure that a chosen interpolation scheme is well-defined and of
good quality, some close neighborhood of each fine point must contain a
sufficient amount of coarse points to interpolate from. Hence the set of coarse
points must be rich enough. However, the set of coarse points should be
sufficiently small in order to achieve a reasonable coarsening rate.
The interpolation should lead to a reduction of roughly five times
the number of non-zeros $nnz$ at each level of the $V$-cycle.



During the setup phase of AMG methods, 
the multi-level $V$-cycle hierarchy is constructed 
consisting of linear systems having 
exponentially decreasing sizes on coarser levels.
A strength-of-connection matrix $S$, 
is typically first computed to
indicate directions of algebraic smoothness 
and applied in the coarsening algorithms.
The construction of $S$ may be performed efficiently, 
because each row is computed independently by
selecting entries in the corresponding row of 
$A$ with a prescribed threshold value $\theta$.
hypre-BoomerAMG provides the parallel maximal 
independent set (PMIS) coarsening \cite{DeSterck2006}, 
which is modified from Luby's algorithm \cite{Luby1986} 
for finding maximal independent sets using random numbers. 

Interpolation operators in AMG transfer residual errors between adjacent levels.
There are a variety of interpolation schemes available.
Direct interpolation \cite{StubenAMG} is straightforward to 
implement in parallel because the interpolatory
set of a fine point $i$ is just a subset of the neighbors of $i$, and thus
the interpolation weights can be determined solely by the $i$-th equation.
A bootstrap AMG (BAMG) \cite{Manteuffel2010} variant of direct interpolation is
generally found to be better than the original formula. 
The weights $w_{ij}$ are computed by
solving the local optimization problem
\[ 
\min \|a_{ii}\vect{w}_i^T + a_{i,C_i^s}\|_2 \quad \mathrm{s.t.} \quad
\vect{w}_i^T \vect{f}_{C_i^s} = \vect{f}_i,
\label{eq:bamgdir}
\]
where $\vect{w}_i$ is a vector that contains $w_{ij}$, $C_i^s$
and denotes strong C-neighbors of $i$ and $\vect{f}$ is a target 
vector that needs to be interpolated exactly.
For elliptic problems where the near null-space is spanned 
by constant vectors, i.e.,  $\vect{f}=\vect{1}$, the
closed-form solution of \eqref{eq:bamgdir} is given by
\begin{equation}
w_{ij} = - \frac{a_{ij} + \beta_i/n_{C_i^s}} {a_{ii}+\sum_{k \in N_i^w} a_{ik}}, 
\quad \beta_i=\sum_{k\in \{\vect{f}_i \cup C_i^w\}}a_{ik} \ ,
\end{equation}
where $n_{C_i^s}$ denotes the number of points in $C_i^s$, $C_i^w$ the weak C-neighbors of $i$, $\vect{f}_i$ the F-neighbors, and $N_i^w$ the weak neighbors.
A known issue of PMIS coarsening is that it can result in F-points without C-neighbors \cite{DeSterck2008}. 
In such situations, distance-one interpolation algorithms often work well, 
whereas interpolation operators 
that can reach C-points at a larger range, such as 
the extended interpolation \cite{DeSterck2008}, 
can generally yield much better convergence.  However, 
implementing extended interpolation is more complicated because
the sparsity pattern of the interpolation operator cannot be
determined a priori, which requires dynamically combining C-points 
in a distance-2 neighborhood.
With minor modifications to the original form, it turns out that
the extended interpolation operator can be rewritten by using
standard sparse matrix computations such as matrix-matrix (M-M) 
multiplication and diagonal scaling with 
certain $FF$- and $FC$-sub-matrices. 
The coarse-fine C-F splitting of the coarse matrix $A$
is given by
\[
A =
\left[
\begin{array}{cc}
A_{FF} &  A_{FC} \\
A_{CF} &  A_{CC}
\end{array}
\right]
\]
where $A$ is assumed to be decomposed into $A=D+A^s+A^w$, the diagonal, 
the strong part and weak part respectively, and
$A_{FF}^w$, $A_{FC}^w$, $A_{FF}^s$ and $A_{FC}^s$ are the corresponding 
sub-matrices of $A^w$ and $A^s$.

The full prolongation operator is given by
\[
P =
\left[
\begin{array}{c}
W  \\
I
\end{array}
\right]
\]
The extended ``MM-ext'' interpolation now takes the form
\begin{align*}
   W &= -\big[{(D_{FF}+D_{\gamma})^{-1} (A^s_{FF}  + D_{\beta})}\big] \big[{D_{\beta}^{-1} A^s_{FC}}\big]
\end{align*}
with
\[
D_{\beta} = {\rm diag}(A^s_{FC} \vect{1}_C) \quad
D_{\gamma} = {\rm diag}(A^\vect{w}_{FF}\vect{1}_F+A^\vect{w}_{FC}\vect{1}_C),
\]
This formulation allows simple and efficient implementations.
Similar approaches that are referred to as ``MM-ext+i'' modified from
the original extended+i algorithm  \cite{DeSterck2008} and
``MM-ext+e'' are also available in BoomerAMG.
See \cite{Li2020} for details on the class of M-M based interpolation operators.

Aggressive coarsening reduces the grid and operator complexities of the AMG hierarchy,
where a second coarsening is applied to the C-points obtained from
the first coarsening to produce a smaller set of final C-points. 
The A-1 aggressive coarsening strategy described in \cite{StubenAMG}
is employed in our studies.
The second PMIS coarsening is performed with the $CC$ block of  $S^{(A)}=S^2+S$ that    
has nonzero entry $S^{(A)}_{ij}$ if $i$ is connected to $j$ with at least a path 
of length less than or equal to two.
Aggressive coarsening is usually used with polynomial interpolation \cite{Yang2010} 
which computes a second-stage interpolation matrix $P_2$ and combined with the 
first-stage $P_1$ as $P=P_1P_2$. 
The aforementioned MM-based interpolation is also available for the second stage.
Finally, Galerkin triple-matrix products are used to build
coarse-level matrices $A_c = P^TAP$ involving the prolongation $P$ 
and restriction $P^T$ operators. Falgout et al.~\cite{Falgout2020} 
provides the additional details omitted 
here on the algorithms employed in hypre for computing distributed
and parallel sparse matrix-matrix  M--M multiplications.

\subsection{Polynomial Gauss--Seidel Smoother}

To solve a linear system $A \vect{x} = \vect{b}$, 
the Gauss-Seidel (GS) iteration is based 
upon the matrix splitting $A = L + D + U$, 
where $L$ and $U$ are the strictly lower and upper 
triangular parts of the matrix~$A$, respectively.
Then, the traditional GS updates the solution based on the following recurrence,
\begin{equation}\label{eq:one-stage}
 \vect{x}^{(k+1)} := \vect{x}^{(k)} + M^{-1} \vect{r}^{(k)},  \quad k=0, 1, 2, \ldots
\end{equation}
where $\vect{r}^{(k)} = \vect{b} - A \vect{x}^{(k)}$, $A = M - N$, and $M = L + D$, 
$N = -U$ or $M = U + D$, $N=-L$ for the forward or backward sweeps, respectively. 
In the following,  $\vect{x}^{(k)}$ is the  $k$--th Gauss-Seidel iterate. 
To avoid explicitly forming the matrix inverse $M^{-1}$ in \eqref{eq:one-stage},
a sparse-triangular solve is used to apply $M^{-1}$ to the current residual
vector~$\vect{r}^{(k)}$. 

To improve the parallel scalability, hypre implements a hybrid variant of GS~\cite{Baker:2011},
where the neighboring processes first exchange the elements of the solution vector on 
the boundary of the subdomain assigned to an a parallel process (MPI rank), but then 
each MPI rank independently applies the local relaxation.
Furthermore, in hypre, each rank may apply multiple local GS sweeps for each round of the 
neighborhood communication. With this approach, each local relaxation updates only the local 
part of the vector $\vect{x}^{(k+1)}$ (during the local relaxation, the non-local solution elements 
on the boundary are not kept consistent among the neighboring ranks). 
This hybrid algorithm is shown to be effective for many problems~\cite{Baker:2011}.

A polynomial GS relaxation employs a fixed number of ``inner'' stationary iterations for approximately solving the triangular system with~$M$,
\begin{equation}\label{eq:polynomial}
 \widehat{\vect{x}}^{(k+1)} := \widehat{\vect{x}}^{(k)} + \widehat{M}^{-1} 
 (\: \vect{b} - A \widehat{\vect{x}}^{(k)} \: ), \quad k=0, 1, 2, \ldots
\end{equation}
where $\widehat{M}^{-1}$ represents the approximate triangular system solution, 
i.e., $\widehat{M}^{-1} \approx M^{-1}$.
A Jacobi inner iteration is employed to replace the direct triangular solve. 
In particular, if $\vect{g}_k^{(j)}$ denotes the approximate solution from 
the $j$-th inner iteration at the $k$-th outer GS iteration, 
then the initial solution is chosen to be the diagonally scaled residual vector,
\begin{equation}\label{eq:jr-initial-guess}
  \vect{g}_k^{(0)} = D^{-1}\vect{r}^{(k)},
\end{equation}
and the $(j+1)$--st Jacobi iteration computes the approximate 
solution by the recurrence
\begin{eqnarray}
\label{eq:jr}
  \vect{g}_k^{(j+1)} & := & \vect{g}_k^{(j)} + D^{-1} (\vect{r}^{(k)} - (L+D) \vect{g}_k^{(j)})\\
\label{eq:jacobi}
                     &  = & D^{-1}(\vect{r}^{(k)} - L \vect{g}_k^{(j)}).
\end{eqnarray}
When ``zero'' inner sweeps are performed, the polynomial GS recurrence becomes
\[
  \widehat{\vect{x}}^{(k+1)} := \widehat{\vect{x}}^{(k)} + \vect{g}_k^{(0)} = 
  \widehat{\vect{x}}^{(k)} + D^{-1}(\vect{b} - A \widehat{\vect{x}}^{(k)}),
\]
and this special case corresponds to Jacobi iteration for the global system, 
or local system on each MPI rank.
When $p$ inner iterations are performed, it follows that
\begin{align*}
\widehat{\vect{x}}^{(k+1)} &:= \widehat{\vect{x}}^{(k)} + \
vect{g}_k^{(p)} = \widehat{\vect{x}}^{(k)} + \sum_{j=0}^{p}
(-D^{-1} L)^j D^{-1} \widehat{\vect{r}}^{(k)} \\
&\approx \widehat{\vect{x}}^{(k)} + 
(I+D^{-1} L)^{-1} D^{-1} \widehat{\vect{r}}^{(k)} = 
\widehat{\vect{x}}^{(k)} + M^{-1}  \widehat{\vect{r}}^{(k)},
\end{align*}
where $M^{-1}$ is approximated by the degree-$p$ Neumann expansion and 
$\widehat{\vect{r}}^{(k)}$ is the residual following the iterated inner 
solve (e.g., $\widehat{\vect{r}}^{(k)}= \vect{b} -A\widehat{\vect{x}}^{(k)}$).
Note that $D^{-1} L$ is strictly lower triangular and nilpotent 
so that the Neumann series converges in a finite number of steps.

\subsection{ILU Smoothers with Scaled Factors and Iterated Triangular Solves}

When applying an incomplete $LU$ factorization as the smoother, careful consideration 
of algorithms which solve the triangular systems is needed.  A direct triangular solver 
is comparatively more expensive on a GPU than iterations using sparse matrix vector 
products,  and thus Jacobi iteration was proposed in \cite{Anzt2015}.  However, in cases 
when the triangular factors are highly non-normal, the Jacobi iterations may diverge
\cite{Anzt2015,Chow2015,Eiermann1993}.
Note that triangular matrices are necessarily non-normal \cite{Horn2012}, 
and three methods to mitigate the degree of non-normality are explored: 
(1) The incomplete $LDU$ factorization with row scaling, 
(2) reordering to enforce symmetric sparsity patterns, and (3) row and column scaling.  
For (2), several reordering strategies are considered, and for (3), 
the Ruiz strategy \cite{Knight2014} is considered.  
Before discussing both experimental and theoretical analyses for these approaches,
some background on (non)-normality is first provided.


\subsubsection{Jacobi Iteration and Normality}

The Jacobi iteration for solving $Ax = b$ can be written 
in the compact form below, with the regular splitting $A = M - N$,
$M = D$ and $N = D - A$. The iteration matrix $G$ is defined as
$G = I - D^{-1}A$ and
\begin{equation}
\vect{x}^{(k+1)} = G\:\vect{x}^{(k)} + D^{-1} \vect{b}
\end{equation}
or  in the non-compact form, with $M = A - D$
\begin{equation}
\vect{x}^{(k+1)} = \vect{x}^{(k)} + D^{-1}\:(\:\vect{b} - A\:\vect{x}^{(k)}\:)
\end{equation}
where $D$ is the diagonal part of $A$. For the triangular systems
resulting from the ILU factorization $A \approx LU$ (as opposed to
the splitting $A = D + L + U$), we denote the iteration matrices as $G_L$ 
and $G_U$ for the lower and upper triangular
factors, $L$ and $U$ respectively.  Let $D_L$ and $D_U$ be the diagonal parts 
of the triangular factors $L$ and $U$ and $I$ denotes the identity matrix.
Assume $L$ has a unit diagonal, then
\begin{eqnarray}
G_L & = & D_L^{-1}\:(\: D_L - L \:) = I - L \\
G_U & = & D_U^{-1}\:(\:D_U - U\:) = I - D_U^{-1}\:U \label{eq:LUiterMat}
\end{eqnarray}
A sufficient condition for the Jacobi iteration to converge is
that the spectral radius of the iteration matrix is less than
unity, or $\rho(G) < 1$. 
$G_L$ is strictly lower and $G_U$ is strictly upper triangular, 
which implies that the spectral radius of both iteration matrices is zero.
Therefore, the Jacobi iteration converges in the asymptotic sense for any triangular system. 
However, the convergence of the Jacobi method may be affected by the degree of 
non-normality of the matrix, which is now defined.

A normal matrix $A \in \mathbb{C}^{n\times n}$ satisfies 
$A^*A = AA^*$, and so a matrix that is non-normal can be defined 
in terms of the difference between $A^*A$ and $AA^*$ 
to indicate the degree of non-normality.  
Measures of nonnormality and the behavior of nonnormal matrices, 
have been extensively considered; see \cite{Ipsen1998,Henrici1962,Elsner1987,Trefethen2005}.  
In this paper,  we use Henrici's definition of the departure from normality of a matrix 
$A\in \mathbb{C}^{n\times n}$ 
\begin{equation}
    dep(A) = \sqrt{\|A\|_F^2-\|D\|_F^2},
\end{equation}
where $D\in\mathbb{C}^{n \times n}$ is the diagonal matrix 
containing the eigenvalues of $A$ \cite{Henrici1962}.  
By no means is this the only practical metric for describing the degree of nonnormality, 
however, it is particularly useful in the context of the current paper.


\subsubsection{Iterating with an LDU Factorization}

Given an incomplete $LU$ factorization, when either factoring out the $D$ matrix 
from the incomplete $LU$, 
or computing an incomplete $LDU$ factorization, the iteration matrix 
(\ref{eq:LUiterMat}) simplifies to 
$G = U_s$, where $U_s$ is a strictly upper triangular matrix (SUT). 
In particular, this leads naturally 
to a Neumann series, where for $\vect{f} = D_U^{-1}\vect{b}$

\begin{eqnarray}
    \vect{x}^{(k+1)} & = & \vect{f} - U_s\:\vect{f} + U_s^2\:\vect{f} - \dots + (-1)^k\:U_s^k\:\vect{f} 
    \nonumber \\
    & = & (\:I - U_s + U_s^2 - \dots + (-1)^k\:U_s^k\:)\:\vect{f} 
        \label{eq:Neumann} \\
    & = & (\:I-U_s\:)^{-1}\:\vect{f}. \nonumber
\end{eqnarray}
In \cite{Thomas2021}, an extensive empirical analysis shows that for certain drop tolerances, 
and  amounts of fill, $\|U_s\|_2<1$.  Thus, the convergence of the Neumann 
series is guaranteed. Even for those drop tolerances where $\|U_s\|<1$ does not hold, 
$\|U_s^p\|_2 <1$ for moderate $p$,  and further $\|U_s^p\|_2$ goes to $0$ fairly quickly.
Furthermore, for the (scaled) matrices considered in the current paper, a correlation exists 
between the maximum value (in magnitude) of $U_s^p$ and this rate of convergence. 
Figure \ref{fig:maxnorm}, compares $\max{|(U_s^p)_{ij}}|$ and $\|U_s^p\|_2$ 
for increasing values of $p$ for a PeleLM matrix dimension $N=331$ 
when computing an incomplete $LDU$ factorization.  
While the number of nonzeros in $U_s^p$ increases for $p \leq 6$, the norm rapidly decreases, 
and $nnz(U_s^p) = 0$ by $p = 39$; see Figure \ref{fig:nnzUp}.  Of course, for strictly upper
triangular matrices, $U_s^p$ is guaranteed to be the zero matrix when $p = n$, 
however, numerical nilpotence occurs much sooner (i.e., $p \ll n$).  
Building a theoretical framework 
to describe the relationship between these metrics is part of our ongoing work, 
and we refer to \cite{Thomas2021} for more empirical analyses of $\|U_s^p\|_2$.  
In particular, a relationship between larger drop tolerances and smaller fill-in 
when computing the ILU factorizations is observed when using row scaling.

\begin{figure}[hh]
\begin{center} 
\begin{subfigure}{.4\textwidth}
\includegraphics[width=\linewidth]{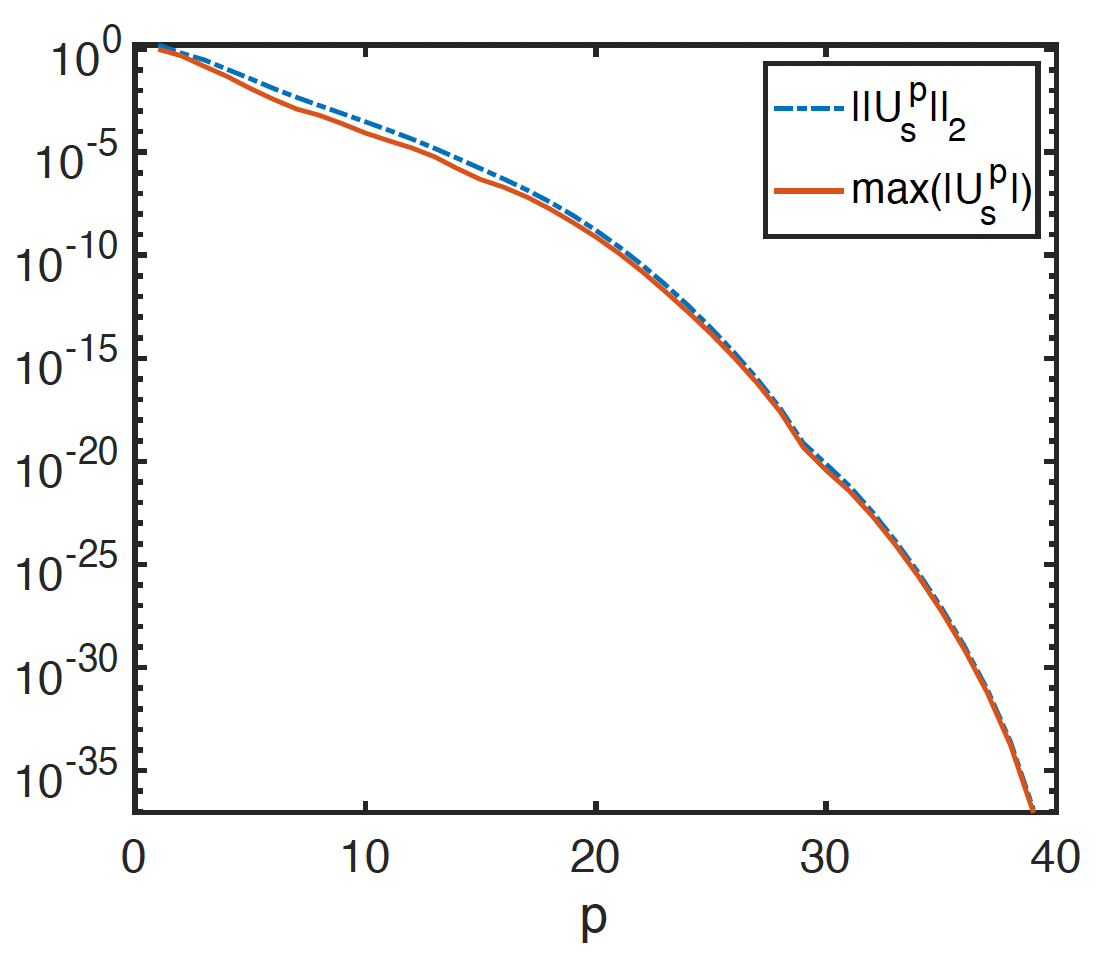}
\subcaption{$\max{|(U_s^p)_{ij}|}$ and $\|U_s^p\|_2$ for $p = 1:39$.  For $p\geq 40$, both $\max{|(U_s^p)_{ij}|}$ and $\|U_s^p\|_2$ are zero. }
\label{fig:maxnorm}
\end{subfigure}
\hspace{1cm}
\begin{subfigure}{.42\textwidth}
\includegraphics[width=\linewidth]{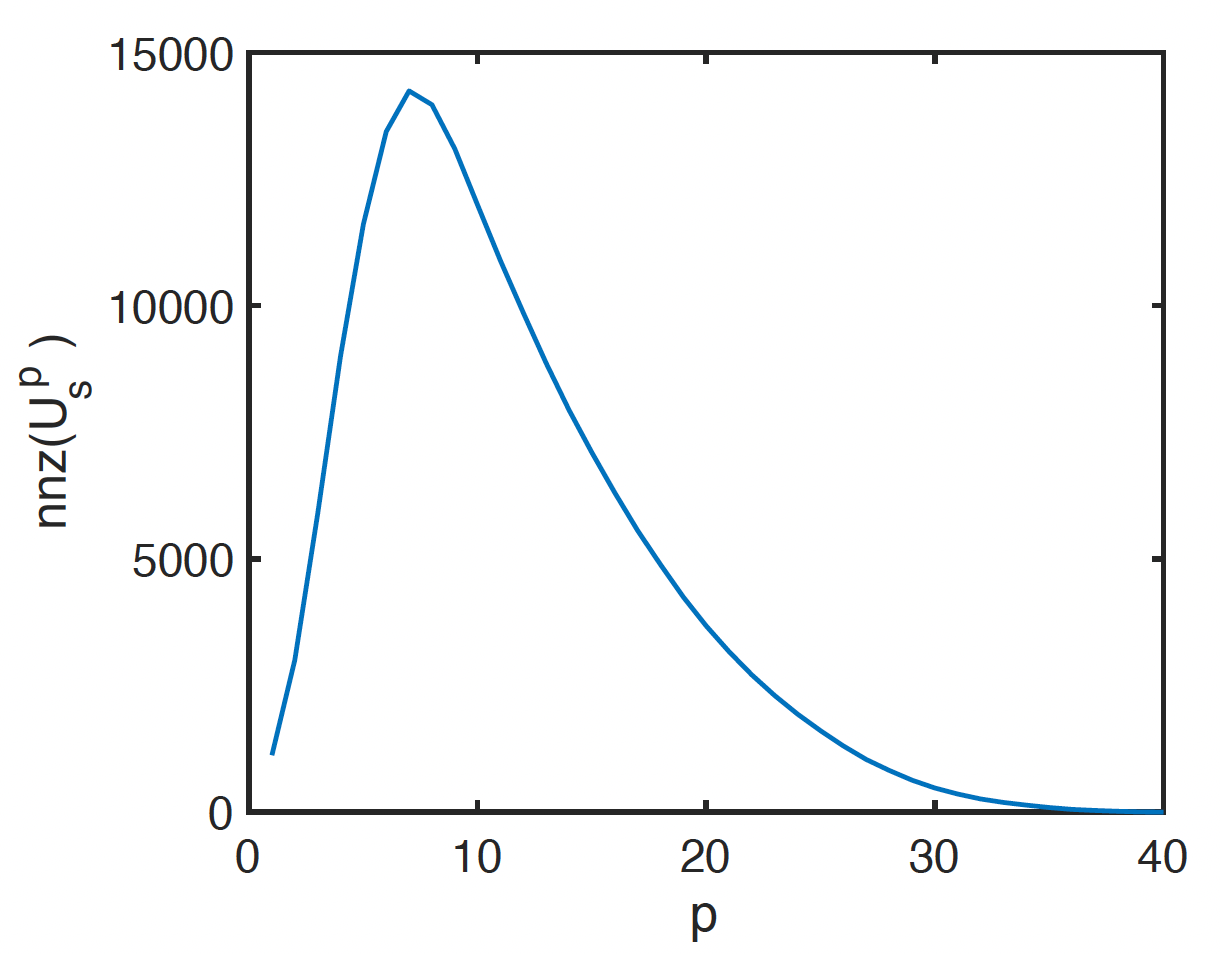}
\subcaption{Number of nonzeros in $U_s^p$ for $p = 1:39$. For $p \geq 40$, $nnz(U_s^p) = 0$.}
\label{fig:nnzUp}
\end{subfigure}
\end{center}
\caption{Comparison of max$|(U_s^p)_{ij}|$, $\|U_s^p\|_2$, and $nnz(U_s^p)$ for a linear system from PeleLM of dimension $N = 331$.}
\label{fig:exp331}
\end{figure}


\subsubsection{Matrix Re-Orderings and Non-Normality}

Richardson iteration can be interpreted in a nonlinear sense
as a continuous in space and discrete time
scheme for a heat-equation that is integrated to steady state.
The Jacobi iteration can also
be considered as a time integrator and the tools of nonlinear stability analysis 
can be applied. In particular, the recent work of Asllani et al. \cite{Asllani2021}
establishes the connection between non-normality, pseudo-spectra and the asymptotic
convergence behaviour of such nonlinear schemes.

For the purposes of iteratively solving triangular systems, one might reasonably 
presuppose that matrix re-orderings have an effect on the departure from 
normality of the $L$ and $U$ factors. In particular, re-ordering can
minimize the amount of fill-in and thereby reduce the $dep(L)$ and $dep(U)$.
Furthermore, symmetric sparsity patterns of the factors would lead to 
a similar number of Jacobi iterations for the triangular solves.
Indeed, this is observed empirically and in the case of non-symmetric
matrices, the structural symmetry resulting from the approximate minimum degree (AMD)
re-ordering in combination with a row-scaled $LDU$ factorization leads to
a small departure from normality and a rapidly converging Jacobi iteration.

Consider the Nalu-Wind pressure continuity equation matrix $A$ of dimension
$N = 3.2$ Million. Several different matrix re-orderings are applied to $A$,
followed by an incomplete $LDU$ factorization. Then the Jacobi iteration is
applied to the approximate linear system $LDUx = b$ where the right-hand side
$b$ is obtained from the extracted pressure system. 
The relative residual for each of the triangular solves
is reported in Table \ref{tab:reorder}.  The matrix re-orderings consisted of the 
symmetric reverse Cuthill-McKee (`symrcm') \cite{GeorgeLiu1981}, 
approximate minimum degree or AMD (`amd') \cite{Amestoy1996}, 
symmetric AMD (`symamd'), and nested dissection
(`dissect') \cite{George1973} in Matlab combined with an ILU$(0)$ factorization
and five (5) Jacobi iterations for the systems $Ly = b$ and $Ux = y$.

\begin{table}[htb]
\centering
\begin{tabular}{|c|c|c|c|c|} \hline
& symrcm & amd & symamd & dissect \\ \hline
$\|b - L\:y_k\|_2 / \|b\|_2$ &  0.33 & 4.2e$-5$ & 1.8e$-5$ &  8.4e$-4$ \\ \hline
$\|y - U\:v_k\|_2 / \|y\|_2$ &  15.4 & 8.1e$-5$ & 4.1e$-5$ &  8.70e$-3$\\ \hline
\end{tabular}
\caption{\label{tab:reorder} Relative residuals for $LDUx = b$
from ILU$(0)$ factorization after five (5) Jacobi iterations 
to solve $L$ and $U$ systems for a linear system from Nalu-Wind model with dimension $N=3.2$ million.}
\end{table}

Clearly, the AMD re-orderings lead to the 
lowest relative residuals and thus are candidates to be 
evaluated for a GMRES+AMG solver for the slightly non-symmetric Nalu-Wind
pressure systems. Table \ref{tab:dep-reorder} displays the departure
from normality $dep(U)$ and $dep(L)$ after re-ordering the coefficient matrix followed by
row scaling using the ILU$(0)$ factorization and row scaled $LDU$ . 

\begin{table}[htb]
\centering
\begin{tabular}{|c|c|c|c|c|c|} \hline
& unscaled & symrcm & amd & symamd & dissect \\ \hline
$dep(L)$ & 2.10e$+4$ & 1.40e$+3$ & 1.06e$+3$  &  927.3 & 1.34e$+3$
\\ \hline
$dep(U)$ & 2.01e$+4$ & 1.05e$+3$ &    787.3  &  782.4 & 796.5 
\\ \hline
\end{tabular}
\caption{\label{tab:dep-reorder} 
Henrici departure from normality $dep(U)$ for ILU$(0)$ factorization 
with row scaling $LDU$ for matrices from Nalu-Wind model with dimension $N=3.2$ million.}
\end{table}

The departure from normality without re-ordering or scaling is
$dep(U) = 2.1$e$+4$. There is a clear correlation between the
$dep(U)$ for each of the orderings in Table \ref{tab:dep-reorder}
and the size of the relative residuals in Table \ref{tab:reorder}.
Thus, a lower value for Henrici's departure from normality is a 
good predictor of the rapid convergence of the iterative triangular solves.

\subsubsection{Iterating with an LU Factorization using Ruiz Scaling}
Next, consider the application of the Ruiz strategy 
\cite{Knight2014} directly to the problematic $L$ and $U$ factors themselves. In cases where only
an $LU$ factorization is available, this strategy is appealing. The objective 
is to reduce the condition number of an ill-conditioned matrix, and it is also shown 
here to be effective in reducing $dep(U)$.  A rigorous experimental analysis of this method is 
given in \cite{Thomas2021} and in the present paper these results are augmented with theoretical
bounds.

The Ruiz algorithm is an iterative 
procedure that uses row and column scaling 
such that the resulting diagonal entries of the matrix 
are one, and all other elements are less 
than (or equal to) one \cite{Knight2014}. 
Algorithm \ref{alg:ILUTJac} describes the ILUT smoother employing the scaling for non-normal
triangular factors, and the practical implementation of this approach is described in 
more detail in \cite{Thomas2021}.
\footnote{Note that row scaling can be used in place of Ruiz scaling by instead 
constructing $D = diag(U)$, and letting $\widetilde{U} = D^{-1}U$.}  It is provided here for ease of reference.
In Section \ref{sec:results} and for the applications considered in this paper, 
only the $U$ factor is scaled. However, this algorithm may also
be extended to scale the $L$ factor in cases when both $L$ and $U$ are highly non-normal. 


\begin{algorithm}
\centering
\begin{algorithmic}[0]
    \State{Given $A\in\mathbb{C}^{n\times n}$, $\vect{b}\in \mathbb{C}^n$}
    \State{Define $droptol$ and $lfill$}
  \State{Compute $A \approx LU$ with $droptol$ and $lfill$ imposed}
  \State{Define $m_L$ and $m_u$, total number of Jacobi iterations for solving $L$ and $U$}
  \State{Define $\vect{y}^{(0)} = 0$, $\vect{v}^{(0)} =\vect{y}^{(0)}$}
  \State{//Jacobi iteration to approximately solve $L\vect{y} = \vect{b}$}
  \For{$k = 1:m_{L}$}
    \State{$\vect{y}^{(k)} = \vect{y}^{(k-1)} + (\vect{b} - L\vect{y}^{(k-1)})$ }
  \EndFor 
  \State{Apply the Ruiz strategy for $U$ and $\vect{y}$ to obtain scaled $\widetilde{U} = D_rUD_c$ and $\widetilde{\vect{y}}=D_r^{-1}\vect{y}^{(m_L)}$}
  \State{Let $D_u = diag(\widetilde{U})$}
  \State{Define $D = D_u^{-1}$}
  \State{//Jacobi iteration to approximately solve $\widetilde{U}\vect{v} =\widetilde{\vect{y}}$}
  \For{$k = 1:m_U$}
    \State{$\vect{v}^{(k)} =\vect{v}^{(k-1)} + D(\widetilde{\vect{y}} - \widetilde{U}\vect{v}^{(k-1)})$ }
  \EndFor 
  \State{//Unscale the approximate solution}
  \State{$\vect{v} =D_c^{-1}\vect{v}^{(m_U)}$}
\end{algorithmic}
\caption{\label{alg:ILUTJac} ILUT+Jacobi smoother for C-AMG with Ruiz scaling for non-normal upper triangular factors.
}
\end{algorithm}

The row and column scaling algorithm results in a transformed linear system $Ax=b$
that now takes the form
\[
L\:D_r\:U\:D_c\:\vect{x} = \vect{b}
\]
and the Jacobi iterations are applied to the upper triangular matrix $D_r\:U\:D_c$.  
Here, $D_r$ and $D_c$ represent row and column scaling, respectively, of the $U$ factor.
However, this matrix has a unit diagonal and the iterations can be expressed in
terms of a Neumann series. When an $LDU$ or $LDL^T$ factorization in available,
the series can be directly obtained from $U$ or $L^T$ as these are
strictly upper triangular matrices. The $D$ matrix can also be extracted from
either of these matrices by row scaling when possible.

When applying scaling to $U$, a matrix 
$U = I + U_s$ results, with $I$ the identity matrix of dimension $n$, and $U_s$ 
strictly upper triangular. Thus, the inverse of such a matrix can be 
expressed as a Neumann series
\begin{equation}\label{eq:Neu}
(\: I + U_s\:)^{-1} = I - U_s + U_s^2 - \cdots
\end{equation}
Because $U_s$ is upper triangular and nilpotent, the above sum is finite, 
and the series is also guaranteed to converge when $\|U_s\|_2 < 1$. 

A loose upper bound for $\|U_s\|_2$ can be derived and attributed in part 
to the fill level permitted in the ILU factorization.
\begin{theorem}\label{teo:Usf}
    Let $p$ represent the number of nonzero elements permitted in the $L$ and $U$ factors of an incomplete LU factorization.  
    Then for scaled $U\in\mathbb{C}^{n\times n}$  such that $U = I + U_s$, with $I$ the identity matrix, 
    and $U_s$ a strictly upper triangular matrix,
    \begin{equation}
        \|U_s\|_2 \leq \|U_s\|_F \leq \sqrt{n(p-1)}.
    \end{equation}
\end{theorem}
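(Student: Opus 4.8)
The plan is to reduce the statement to two elementary observations: the universal norm inequality $\|A\|_2 \le \|A\|_F$, and a direct entrywise count of $U_s$ that is made possible by the scaling. First I would dispose of the left-hand inequality, which has nothing to do with the ILU structure: for any matrix $A$ one has $\|A\|_2 = \sigma_{\max}(A) \le \bigl(\sum_i \sigma_i(A)^2\bigr)^{1/2} = \|A\|_F$, so in particular $\|U_s\|_2 \le \|U_s\|_F$.

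For the right-hand inequality I would invoke the defining property of the scaling applied to $U$ (the Ruiz strategy of Algorithm~\ref{alg:ILUTJac}, or the row scaling described in the accompanying footnote): after scaling, every diagonal entry of $U$ equals one and every off-diagonal entry has magnitude at most one. Hence every entry of the strictly upper triangular part $U_s$ satisfies $|(U_s)_{ij}| \le 1$. I would then use the meaning of the fill parameter: in an ILUT-type factorization at most $p$ nonzero entries are retained in each row of $U$, and one of those is the (unit) diagonal entry, so each row of $U_s$ contains at most $p-1$ nonzero entries.

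Combining these, I would bound the squared Frobenius norm row by row: the contribution of row $i$ is $\sum_j |(U_s)_{ij}|^2 \le (p-1)\cdot 1$, and summing over the $n$ rows gives $\|U_s\|_F^2 \le n(p-1)$; taking square roots and chaining with the first step yields $\|U_s\|_2 \le \|U_s\|_F \le \sqrt{n(p-1)}$.

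I do not expect a genuine obstacle here — the result is a deliberately loose worst-case estimate. The only point that needs care is matching the informal phrase ``number of nonzero elements permitted'' to the correct per-row reading of the ILUT fill parameter, since a total-nonzero reading would instead produce the (differently shaped, and not generally valid) bound $\sqrt{p-n}$. It is also worth noting explicitly in the proof that the bound wastes two sources of slack: the scaled entries of $U_s$ are typically well below one in magnitude, and the leading rows of $U_s$ are forced to be sparser than $p-1$; this is consistent with the much faster decay of $\|U_s^p\|_2$ observed around Figure~\ref{fig:exp331}.
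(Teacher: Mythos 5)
Your proof is correct and follows essentially the same route as the paper's: bound $\|U_s\|_2$ by $\|U_s\|_F$, then count at most $p-1$ entries per row (the diagonal being excluded), each of magnitude at most one after scaling, giving $\|U_s\|_F^2 \le n(p-1)$. Your explicit reading of $p$ as a per-row fill limit matches the paper's own (terse) proof, so there is nothing to add.
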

\begin{proof}
The proof follows from taking the Frobenius norm of a strictly 
upper triangular matrix 
with a maximum of $p-1$ nonzero elements per row, 
and the observation that 
$u_{ij} \leq 1$ for all $i,j$.
\end{proof}
The bound in Theorem \ref{teo:Usf} is obviously pessimistic, 
even when allowing only a small fill level in the ILU factorization. 
However, it is observed in practice that after scaling, $\|U_s\|_2 < 1$, 
and this is often true for the ILUT smoothers with $fill \leq 20$ 
for the applications considered in this paper.  
An increase in the size of $\|U_s\|_2$ is observed as the level of fill increases; 
see Table \ref{tab:2normUs} for a comparison of $\|U_s\|_2$ with $\|U_s\|_F$ 
and the theoretical bound in Theorem \ref{teo:Usf}, when varying 
the fill level.

\begin{table}[htb]
\centering
\begin{tabular}{|c|c|c|c|c|c|} \hline
& $fill = 5$ & $fill = 10$ & $fill = 20$ & $fill = 50$ & $fill = 200$ \\ \hline
$\|U_s\|_2$ & 0.679 & 0.753 & 0.87 & 1.04 & 1.22 \\ \hline
$\|U_s\|_F$ & 120.69 & 121.86 & 122.22 & 122.39 & 122.44\\ \hline
Theorem \ref{teo:Usf} Bound & 283.21 & 357.32 & 519.17 & 833.74 & 1.68e+3\\ \hline
\end{tabular}
\caption{\label{tab:2normUs} Comparison of $\|U_s\|_2$, $\|U_s\|_F$, and the 
theoretical bound provided in Theorem \ref{teo:Usf} for different amounts of fill 
permitted in the ILU factorization, and after applying scaling for a linear system from PeleLM of dimension $N=14186$.}
\end{table}

The results in Table \ref{tab:2normUs} indicate not only that the bound given in 
Theorem \ref{teo:Usf} is less pessimistic for $\|U_s\|_F$, but also (and more importantly) 
that it may be reasonably conjectured that for small levels of fill, $\|U_s\|_2 < 1 < \|U_s\|_F$.
In this case, both a lower and upper bound on $\|U_s\|_2$ may be derived as follows.

\begin{theorem}
    Let $U_s$ be defined as in Theorem \ref{teo:Usf} and 
    $r = rank(U_s)$.  
    If $\|U_s\|_2 < 1 < \|U_s\|_F$, then $\|U_s\|_2$ is bounded as
    \begin{equation}
        \frac{1}{\sqrt{r}} \leq \|U_s\|_2 < 1.
    \end{equation}
\end{theorem}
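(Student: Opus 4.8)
The plan is to read off the upper bound directly from the hypothesis and to obtain the lower bound from the standard inequality relating the Frobenius and spectral norms of a matrix through its rank. Since the statement assumes $\|U_s\|_2 < 1$, the only inequality that needs an argument is $\|U_s\|_2 \ge 1/\sqrt{r}$, and for this the hypothesis $\|U_s\|_F > 1$ will do the work once we have related $\|U_s\|_2$ and $\|U_s\|_F$ through $r = \mathrm{rank}(U_s)$.

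First I would write down a singular value decomposition of $U_s$ with singular values $\sigma_1 \ge \sigma_2 \ge \cdots \ge \sigma_r > 0$, where the number of nonzero singular values is exactly $r = \mathrm{rank}(U_s)$ (note $r \ge 1$, since $\|U_s\|_F > 1$ forces $U_s \ne 0$). Then $\|U_s\|_2 = \sigma_1$ and $\|U_s\|_F^2 = \sum_{i=1}^r \sigma_i^2$. Bounding each term by the largest gives $\|U_s\|_F^2 \le r\,\sigma_1^2 = r\,\|U_s\|_2^2$, hence $\|U_s\|_2 \ge \|U_s\|_F / \sqrt{r}$. Combining this with the hypothesis $\|U_s\|_F > 1$ yields $\|U_s\|_2 \ge \|U_s\|_F/\sqrt{r} > 1/\sqrt{r}$, which in particular gives the claimed $\|U_s\|_2 \ge 1/\sqrt{r}$; the bound $\|U_s\|_2 < 1$ is the other hypothesis, so the two-sided estimate is complete.

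There is essentially no obstacle here: the whole argument is the elementary identity $\|A\|_F \le \sqrt{\mathrm{rank}(A)}\,\|A\|_2$ applied to $A = U_s$. The only point worth a sentence of care is that the Frobenius-norm sum runs over precisely $\mathrm{rank}(U_s)$ nonzero singular values, which is immediate from the SVD; equivalently, one may phrase it via $\|U_s\|_F^2 = \mathrm{tr}(U_s^* U_s) = \sum_i \lambda_i(U_s^* U_s)$, where at most $r$ of the eigenvalues $\lambda_i(U_s^* U_s)$ are nonzero and each is bounded by $\lambda_{\max}(U_s^* U_s) = \|U_s\|_2^2$. Notably, this lower bound does not use strict upper triangularity or the entrywise bound $u_{ij}\le 1$ at all — those enter only through the hypothesis $\|U_s\|_F > 1 > \|U_s\|_2$ being plausible in practice, as illustrated in Table \ref{tab:2normUs} — so the proof is a short consequence of the rank–Frobenius–spectral relation together with the two assumed inequalities.
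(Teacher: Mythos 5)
Your proof is correct and follows essentially the same route as the paper, which simply invokes the standard inequality $\|U_s\|_2 \leq \|U_s\|_F \leq \sqrt{r}\,\|U_s\|_2$ and combines it with the hypothesis $\|U_s\|_F > 1$; you merely spell out the SVD derivation of that inequality that the paper takes as known.
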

\begin{proof}
    The proof follows from the fact that $\|U_s\|_2 \leq \|U_s\|_F \leq \sqrt{r}\|U_s\|_2$.
\end{proof}

    

The rank of $U_s$ may also provide insight into the relationship between $\|U_s\|_2$ 
and the convergence behavior of the iterative method.  In particular, 
it was found that for the PeleLM matrix dimension $N = 14186$, 
and $droptol = 1$e$-1$ and $fill = 5$ for the 
ILUT factorization, ${\rm rank}(U_s) \approx 3500$. Whether or not the low-rank nature 
of $U_s$ can be exploited is a question for future investigation. For now, 
consider the largest singular values and the convergence rate of the Krylov method.  
$\|U_s\|_2$ is also equal to the largest singular value of $U_s$, denoted as $\sigma_1(U_s)$. 
The magnitude of $\sigma_1(U_s)$ is clearly related to
convergence rate of the iterative method.  In particular, 
when $\|U_s\|_2 = \sigma_1(U_s) > 1$ (e.g., when $fill > 20$ for the system considered here), 
faster convergence is achieved compared with when $\|U_s\|_2 = \sigma_1(U_s) < 1$ 
(e.g., when $fill \leq 20$); see Table \ref{tab:SV} for a comparison of the 
iterations required to reduce the relative 
residual below 1e$-5$ with $\sigma_1(U_s)$  when varying the fill. 
When the fill increases, $\sigma_1(U_s)$ grows larger, 
and the number of iterations decreases.  This is especially interesting 
because the Neumann series in (\ref{eq:Neu}) converges for $\|U_s\|_2 <1$, and yet 
the fastest GMRES convergence is observed for $\|U_s\|_2 >1$ for this particular application.  
Ongoing work seeks to build a theoretical framework to describe this relationship.  
Note here that the ILUT smoother is applied on all levels of the $V$-cycle, 
whereas in later results a 
hybrid approach is employed where the ILUT smoother is applied only at the finest level.  

\begin{table}[htb]
\centering
\begin{tabular}{|c|c|c|c|c|c|} \hline
& $fill = 5$ & $fill = 10$ & $fill = 20$ & $fill = 50$ & $fill = 200$ \\ \hline
$\sigma_1(U_s)$ & 0.679 & 0.753 & 0.87 & 1.04 & 1.22 \\ \hline
GMRES Iters & 7 & 6 & 3 & 2 & 2 \\ \hline
\end{tabular}
\caption{\label{tab:SV}Comparing the largest singular value of $U_s$ with 
 GMRES iteration counts for different fill levels permitted in the ILU factorization 
 for a linear system from PeleLM of dimension $N=14186$.}
\end{table}



Next, an upper bound on the $dep(U)$ is derived after scaling is applied, 
noting that an analogous statement can be made for $dep(L)$. 

\begin{theorem}\label{teo:dep}
Let $U \in \mathbb{C}^{n\times n}$ be a scaled upper triangular matrix 
such that $U = I + U_s$, with $I$ the identity 
matrix of dimension $n$ and $U_s$ a strictly upper triangular matrix 
with elements $u_{i,j} \leq 1$.  
For $\nu = \|U_s\|_F$ 
\begin{equation}\label{eq:depU}
    dep(U) \leq \sqrt{(2\sqrt{n} + \nu)\nu}.
\end{equation}
\end{theorem}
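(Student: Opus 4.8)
The plan is to work directly from Henrici's definition $dep(U) = \sqrt{\|U\|_F^2 - \|D\|_F^2}$, where $D$ is the diagonal matrix holding the eigenvalues of $U$. Since $U = I + U_s$ is upper triangular with every diagonal entry equal to $1$, its spectrum is $\{1\}$ with algebraic multiplicity $n$; hence $D = I$ and $\|D\|_F^2 = n$. This reduces the whole task to producing the upper bound $\|U\|_F^2 \le n + (2\sqrt{n} + \nu)\nu$.

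For that I would simply invoke the triangle inequality for the Frobenius norm,
\[
\|U\|_F = \|I + U_s\|_F \le \|I\|_F + \|U_s\|_F = \sqrt{n} + \nu,
\]
square it to get $\|U\|_F^2 \le n + 2\sqrt{n}\,\nu + \nu^2 = n + (2\sqrt{n} + \nu)\nu$, and substitute into Henrici's formula:
\[
dep(U)^2 = \|U\|_F^2 - \|D\|_F^2 \le (2\sqrt{n} + \nu)\nu .
\]
Taking square roots gives \eqref{eq:depU}. Note that the hypothesis $u_{i,j}\le 1$ plays no role in this particular argument; it is the companion fact used in Theorem~\ref{teo:Usf} to control the size of $\nu = \|U_s\|_F$ itself.

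Honestly, there is no real obstacle: the only step requiring a moment's thought is identifying that all eigenvalues of $U$ equal $1$, which is immediate from triangularity. I would also remark in the proof that the estimate is deliberately loose — since $I$ is diagonal and $U_s$ strictly upper triangular, $\langle I, U_s\rangle_F = \operatorname{tr}(U_s) = 0$, so in fact $\|U\|_F^2 = n + \nu^2$ exactly and hence $dep(U) = \nu$; the bound in \eqref{eq:depU} is the (slightly weaker) form obtained through the triangle inequality. The same derivation applied to $L = I + L_s$ yields the analogous statement $dep(L) \le \sqrt{(2\sqrt{n} + \|L_s\|_F)\,\|L_s\|_F}$.
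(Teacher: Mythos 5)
Your proof is correct and follows essentially the same route as the paper's: apply Henrici's definition with $\|D\|_F^2 = n$ (all eigenvalues of the unit-diagonal triangular matrix equal $1$) and bound $\|I+U_s\|_F \le \sqrt{n}+\nu$ by the triangle inequality before squaring and subtracting $n$. Your closing remark is a nice sharpening the paper does not make: since $\operatorname{tr}(U_s)=0$ the cross term vanishes, so $\|U\|_F^2 = n+\nu^2$ and in fact $dep(U)=\nu$ exactly, of which \eqref{eq:depU} is a weaker consequence.
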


\begin{proof}
    For this proof, the Frobenius norm subscript is dropped for simplicity. 
    With $U = I + U_s$, it follows that
    \begin{eqnarray}
        dep(U)^2 &=& \|U\|^2 - \|D\|^2 = \|I + U_s\|^2 - n \\ 
        &= &\|I + U_s\|\|I + U_s\| - n \leq (\sqrt{n}+\|U_s\|)^2 - n\\ \label{eq:pfDepu}&=& (2\sqrt{n}+\|U_s\|)\|U_s\|.      
    \end{eqnarray}
    Denoting $\nu = \|U_s\|_F$ and taking the square root of (\ref{eq:pfDepu}), 
    the desired result is obtained.
\end{proof}
Theorem \ref{teo:dep} is completely independent of 
$\kappa_2(U)$ and gives a practical 
bound on the departure from normality of a scaled $U$. 
Consider for example $n = 10^9$ and $\|U_s\| = 0.8$; 
Theorem \ref{teo:dep} guarantees 
that after scaling, $dep(U)$ can be no larger than $225$. In general, even 
if $\|U_s\|_F$ is larger than one, as long as it is modest 
(e.g.  $O(10)$ or even $O(10^2)$),  a reasonable bound on $dep(U)$ is obtained 
after scaling the upper triangular factor. 

\begin{table}[htb]
\centering
\begin{tabular}{|l|c|c|c|c|c|} \hline
& $fill = 5$ & $fill = 10$ & $fill = 20$ & $fill = 50$ & $fill = 200$ \\ \hline
$dep(U)$ before & $7.98$e+7 & $1.06$e+8 & 1.12e+8 & 1.16e+8 & 1.17e+8 \\ \hline 
$dep(U)$ after  & 19.47 & 25.79 & 27.42 & 28.16 & 28.40 \\ \hline 
Theorem \ref{teo:dep} Bound & 208.12 & 209.48 & 209.89 & 210.08 & 210.14 \\ \hline 
\end{tabular}
\caption{\label{tab:depU}$dep(U)$ before and after Ruiz scaling, 
and the theoretical 
bound provided in Theorem \ref{teo:dep}, for different fill levels permitted 
in the ILU factorization. $N=14186$.}
\end{table}

Theorem \ref{teo:dep} may also provide a relatively inexpensive 
test for early termination
of the Ruiz algorithm.  In practice, the algorithm generally takes 
a maximum of five
iterations to converge, but the departure from normality does
not necessarily decrease for
each iteration. In particular, $dep(U)$ reaches a minimum after only the 
first or second iteration, with a slight increase for the remaining iterations.  
Thus, if at step $k$ of the Ruiz algorithm, $dep(U_s) < tol$ (where $tol$ can be 
some modest scalar multiple of the bound given in Theorem \ref{teo:dep}), 
the algorithm can be terminated early. For the systems considered here, 
this can result in a potential cost savings of up to four iterations of the Ruiz algorithm.

While scaling does not guarantee that the resulting matrix is diagonally dominant, 
when a large drop tolerance and small fill-in value are specified, 
the resulting $L$ and $U$ factors 
are nearly diagonally dominant, which we define as follows.  Here, we consider only the $U$ factor, but analogous results can be derived for $L$.

\begin{definition}\label{defn:NDD}
    Let $u_{ii}$ be the diagonal elements of $U \in \mathbb{C}^{n\times n}$ 
    for $i = 1, 2, \dots, n$ and $\delta = \max_i{\delta_i}$, where $\delta_i>0$
    is such that 
    \begin{equation}
        |u_{ii}| \geq \displaystyle\sum_{j \not=i}|u_{ij}|-\delta_i
    \end{equation} 
    for all $i = 1,2,\dots,n$. $U$ is {\it nearly (row) diagonally dominant} if and only if 
    \begin{equation}
        |u_{ii}| \geq \displaystyle\sum_{j \not = i} |u_{ij}|-\delta.    
    \end{equation}
\end{definition}
The following theorem now takes into consideration the near diagonal dominance of
$U$,  in cases where $U$ takes the form $U = I + U_s$, as is the case when 
applying scaling to the $U$ factor of an incomplete $LU$ factorization.
\begin{theorem}\label{teo:dd}
    Assume $U = I + U_s \in \mathbb{C}^{n\times n}$, for $U_s$ strictly upper triangular.  If $U$ is nearly diagonally dominant, then 
    \begin{equation}
        dep(U) \leq \sqrt{n}(1+\delta),
    \end{equation}
    where $\delta$ is as defined in definition \ref{defn:NDD}.
\end{theorem}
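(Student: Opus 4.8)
The plan is to reduce $dep(U)$ to the Frobenius norm of $U_s$ and then bound that norm using the near diagonal dominance hypothesis. First I would observe that because $U = I + U_s$ is upper triangular with $U_s$ strictly upper triangular, every eigenvalue of $U$ equals a diagonal entry $u_{ii} = 1$, so the diagonal matrix $D$ of eigenvalues satisfies $\|D\|_F^2 = n$. Splitting the Frobenius norm of $U$ into its diagonal and strictly-upper parts gives $\|U\|_F^2 = \|I\|_F^2 + \|U_s\|_F^2 = n + \|U_s\|_F^2$, and hence $dep(U)^2 = \|U\|_F^2 - \|D\|_F^2 = \|U_s\|_F^2$. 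This is the same identity already exploited implicitly in the proof of Theorem~\ref{teo:dep}; the role of the present theorem is that near diagonal dominance yields a sharper control on $\|U_s\|_F$ than the generic bound $\|U_s\|_F \le \sqrt{n(p-1)}$ of Theorem~\ref{teo:Usf}.

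Next I would translate Definition~\ref{defn:NDD} into a per-row $\ell^1$ bound on $U_s$. Since $u_{ii} = 1$ and, for $j\neq i$, $u_{ij} = (U_s)_{ij}$, which vanishes unless $j > i$, the near diagonal dominance inequality $|u_{ii}| \ge \sum_{j\neq i} |u_{ij}| - \delta$ becomes $\sum_{j>i} |(U_s)_{ij}| \le 1 + \delta$ for every $i$ (using $\delta_i \le \delta$). Then I would bound the squared Frobenius norm row by row, using the elementary inequality $\sum_k a_k^2 \le \big(\sum_k a_k\big)^2$ valid for nonnegative reals:
\[
\|U_s\|_F^2 = \sum_{i=1}^n \sum_{j>i} |(U_s)_{ij}|^2 \le \sum_{i=1}^n \Big(\sum_{j>i} |(U_s)_{ij}|\Big)^2 \le \sum_{i=1}^n (1+\delta)^2 = n(1+\delta)^2.
\]
Taking square roots and combining with $dep(U) = \|U_s\|_F$ gives $dep(U) \le \sqrt{n}(1+\delta)$, as claimed.

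There is no substantive obstacle here; the only points requiring care are bookkeeping details — confirming that the eigenvalues of the triangular matrix $U$ are exactly its unit diagonal entries (so Henrici's formula collapses to $\|U_s\|_F$), and checking that the strictly-upper-triangular structure lets the full off-diagonal row sum in Definition~\ref{defn:NDD} be replaced by the sum over $j>i$ only. One could also remark that the estimate interpolates sensibly between the two regimes: when scaling merely forces $|u_{ij}| \le 1$ and each row of $U_s$ carries roughly $p-1$ entries near one, $\delta$ is of order $p-1$ and the bound recovers the order of Theorem~\ref{teo:Usf}, whereas when $U$ is genuinely nearly diagonally dominant ($\delta$ small) the bound is far stronger.
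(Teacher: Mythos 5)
Your proof is correct and follows essentially the same route as the paper's: both reduce $dep(U)^2$ to the off-diagonal (strictly upper triangular) sum of squares using the unit eigenvalues of $U$, then bound each row's sum of squares by the square of its $\ell^1$ off-diagonal sum, which near diagonal dominance caps at $(1+\delta)^2$. Your version is slightly cleaner in making the identity $dep(U)=\|U_s\|_F$ explicit, but the substance is identical to the paper's argument.
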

\begin{proof}
     \begin{eqnarray}
        dep(U)^2 &=& \|U\|^2 - \|D\|^2 = \displaystyle \sum_{i = 1}^n\sum_{j = 1}^n|u_{ij}|^2 - n \\ 
        & = & \sum_{i = 1}^n (\:|u_{ii}|^2 + \sum_{\substack{j = 1,\\j \not= i}}^n\:)-n  \\
        & \leq & \sum_{i = 1}^n(1 + (1+\delta)^2)-n\\
        & = & n(1+(1+\delta)^2)-n\\
        \label{eq:pfDDDepu}& = & n(1+\delta)^2.
    \end{eqnarray}
    Taking the square root of (\ref{eq:pfDDDepu}) gives the desired result.
\end{proof}
While this bound is not tighter than the one given in Theorem \ref{teo:dd}, it establishes the
connection between the fill level and $dep(U)$. Specifically, when a higher fill is used in the ILU
factorization, the resulting triangular factors have the potential to be far from diagonally dominant
(resulting from more nonzeros allowed in each row).  Theorem \ref{teo:dep} guarantees that as $\delta$
grows smaller (i.e., as $U$ becomes closer to diagonally dominant), the upper bound on $dep(U)$ 
decreases monotonically.

\section{Numerical Results}\label{sec:results}

To study the performance of the iterated Gauss-Seidel and ILU smoothers
in a practical setting, incompressible fluid flow 
simulations were performed with Nalu-Wind and PeleLM.
These are two of the primary fluid mechanics codes for the 
application projects chosen for the DOE Exascale Computing Project (ECP)
and used for high-fidelity simulations. We describe both next.

\subsection{Nalu-Wind Model}

Nalu-Wind solves the incompressible Navier-Stokes
equations, with a pressure projection scheme \cite{Ashesh2021}.  
The governing equations are discretized in time with a BDF-2 integrator,
where an outer Picard fixed-point iteration is employed to reduce the nonlinear
system residual at each time step.
Within each time step, the Nalu-Wind simulation time is often dominated by the
time required to setup and solve the linearized governing equations.
The pressure systems are solved using MGS-GMRES with an
AMG preconditioner, where a polynomial Gauss-Seidel
smoother is now applied as described in Mullowney 
et al.~\cite{Mullowney2021}.  Hence, Gauss-Seidel is a compute time intensive
component, when employed as a smoother within an AMG $V$-cycle.

The McAlister experiment for wind-turbine blades is an
unsteady RANS simulation of a fixed-wing, with a NACA0015 cross section, 
operating in uniform inflow. 
Resolving the high-Reynolds number boundary layer over the wing surface 
requires resolutions of ${\cal O}(10^{-5})$ normal to the surface resulting in 
grid cell aspect ratios of ${\cal O}(40,000)$. These high aspect ratios present a 
significant challenge. Overset meshes were employed \cite{Ashesh2021}
to generate body-fitted meshes for the wing and the wind tunnel geometry.
The simulations were performed for the wing at 12 degree angle of attack, 1 m chord length, 
denoted $c$, 3.3 aspect ratio, i.e., $s = 3.3c$, and square wing tip. 
The inflow velocity is $u_{\infty}= 46$ m/s, the density is $\rho_{\infty} = 1.225$
${\rm kg/m}^3$, and dynamic viscosity is $\mu = 3.756\times 10^{-5}$ kg/(m s), 
leading to a Reynolds number, $Re = 1.5\times 10^6$.
Wall normal resolutions were chosen to adequately represent the boundary layers 
on both the wing and tunnel walls. The $k - \omega$ SST RANS turbulence model 
was employed for the simulations. Due to the complexity of mesh generation, 
only one mesh with approximately 3 million grid points was generated. 

Coarsening is based on the parallel maximal
independent set (PMIS) algorithm of Luby \cite{DeSterck2006, DeSterck2008,Luby1986} 
allowing for a parallel setup phase.  
The strength of connection threshold is set to $\theta = 0.25$.  
Aggressive coarsening is applied on the
first two $V$-cycle levels with multi-pass 
interpolation and a stencil width of
two elements per row.  
The remaining levels employ M-M extended$+$i interpolation,
with truncation level $0.25$ together with a maximum
stencil width of two matrix elements per row. 
The smoother is hybrid block-Jacobi with two sweeps of polynomial
Gauss-Seidel applied locally on an MPI rank and then Jacobi 
smoothing for globally shared
degrees of freedom.  The coarsening rate for the wing simulation is roughly
$4\times$ with eight levels in the $V$-cycle for hypre.  Operator complexity
$C$ is close to $1.6$ indicating more efficient $V$-cycles with aggressive
coarsening, however, an increased number of GMRES iterations 
are required compared to standard coarsening. 
The comparison among $\ell_1$--Jacobi,  Gauss-Seidel and 
the polynomial Gauss-Seidel smoothers is shown in Figure~\ref{fig:conmat}.

\begin{figure}[htb!]
\centering
\includegraphics[width=0.5\textwidth]{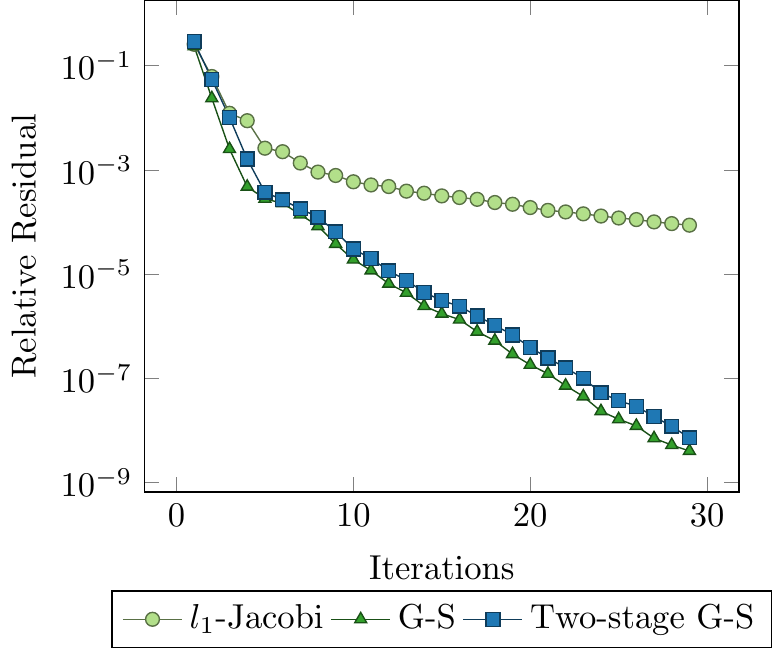}
\caption{\label{fig:conmat} GMRES$+$AMG with $\ell_1$--Jacobi, Gauss-Seidel 
and poly Gauss-Seidel smoothers for a linear system from Nalu-Wind model with dimension $N=3.1$ million.}
\end{figure}

In order to evaluate the differences between the direct and iterative 
triangular solvers employed in the smoother,
the compute times for a single GMRES$+$AMG pressure solve
are given in Table \ref{tab:smoothers}. The $\ell_1$--Jacobi
smoother from hypre is included for comparison. Both the
CPU and GPU times are reported for the NREL Eagle
supercomputer with Intel Skylake Xeon CPUs and
NVIDIA V100 GPU's. In all cases, one sweep of
Gauss-Seidel and two sweeps of $\ell_1$--Jacobi are employed
because the number of sparse matrix-vector multiplies
are equivalent in both cases. Either one CPU or GPU 
was employed in these tests. The time reported corresponds
to when the relative residual has been reduced below 1$e-$5.

\begin{table}[htb]
\centering
\begin{tabular}{|l|c|c|c|} \hline
 & $\ell_1$--Jacobi & Gauss-Seidel & Poly G-S \\ \hline
CPU (sec) & 19.4 &  12 & 19.5 \\ \hline
GPU (sec) & 0.33 & 3.1 & 0.30 \\ \hline
\end{tabular}
\caption{ \label{tab:smoothers} GMRES$+$AMG compute time. 
Jacobi, Gauss-Seidel and Poly Gauss-Seidel Smoothers for a linear system from Nalu-Wind model with dimension $N = 3.1$ million.}
\end{table}

The timing results indicate the solver time with Gauss-Seidel
is lower than when the $\ell_1$ Jacobi smoother is employed on
the CPU. However, the latter is more computationally efficient 
on the GPU. Whereas the polynomial Gauss-Seidel smoother leads 
to the lowest compute times on the GPU and results in a 
ten times speed-up compared to the smoother
that employs a direct triangular solver.

\subsection{PeleLM Model}

Pressure linear systems are taken from the ``nodal projection'' 
component of the time integrator used in PeleLM \cite{PeleLM}.
PeleLM is an adaptive mesh low Mach number combustion code 
developed and supported under DOE's Exascale Computing Program. 
PeleLM features the use of a variable-density projection scheme to ensure 
that the velocity field used to advect the state satisfies an elliptic
divergence constraint. Physically, this constraint enforces that
the resulting flow evolves consistently with a spatially uniform
thermodynamic pressure across the domain. A key feature of
the model is that the fluid density may vary considerably across
the computational domain. Extremely ill-conditioned problems arise 
for incompressible and reacting flows in the low Mach flow regime, 
particularly when using cut-cell approaches to complex
geometries, where non-covered cells that are cut by the 
domain boundary can have arbitrarily small volumes and areas.
The standard Jacobi and Gauss-Seidel smoothers are less effective 
in these cases at reducing the residual error at each level of the C-AMG
$V$-cycle and this can lead to very large iteration counts for the 
Krylov solver.

The Krylov solver was run with a hybrid $V$-cycle with ILU
smoothing only on the finest level $\ell=1$, 
and polynomial Gauss-Seidel on the remaining
levels. Iterative Jacobi triangular solvers were employed.
One pre- and post-smoothing sweep was applied on all $V$-cycle levels, 
except for the coarse level direct solve. The polynomial 
smoother leads to a slower convergence rate compared 
to the hybrid and ILU smoothers, which exhibit similar convergence. 
Because an $A\approx LDU$ factorization is employed in hypre,
Ruiz scaling was not required in our numerical experiments. 
Furthermore, only three Jacobi iterations for $L$ and $U$ were
needed to achieve sufficient accuracy to maintain the
convergence rate.

Results for the $N=1.4$ million linear system, 
representing a cylindrical geometry,
are plotted in Figure \ref{fig:14K-hypre}. These results were obtained on the
Intel Skylake CPUs and NVIDIA V100 GPUs. 
``MM-ext+i'' interpolation is employed, with a strength of
connection threshold $\theta = 0.25$.  Because
the problem is very ill-conditioned, GMRES achieves the 
best convergence rates and the lowest NRBE. 
Iterative triangular solvers were employed in these tests.
The convergence histories are plotted for hybrid ILU, 
and polynomial Gauss-Seidel smoothers in Figure \ref{fig:14K-hypre}. 
The fastest convergence rate was
achieved when the level of fill parameter was set to $lfil=10$
and with a drop tolerance of $tol = 1\times 10^{-2}$.
The reduction in the relative 
residual is sufficient for the pressure equation after 
three GMRES iterations. The ILUT parameters
were $droptol = 1$e$-2$ and $lfil = 5$. 

\begin{figure}
\centering
\includegraphics[width=0.55\textwidth]{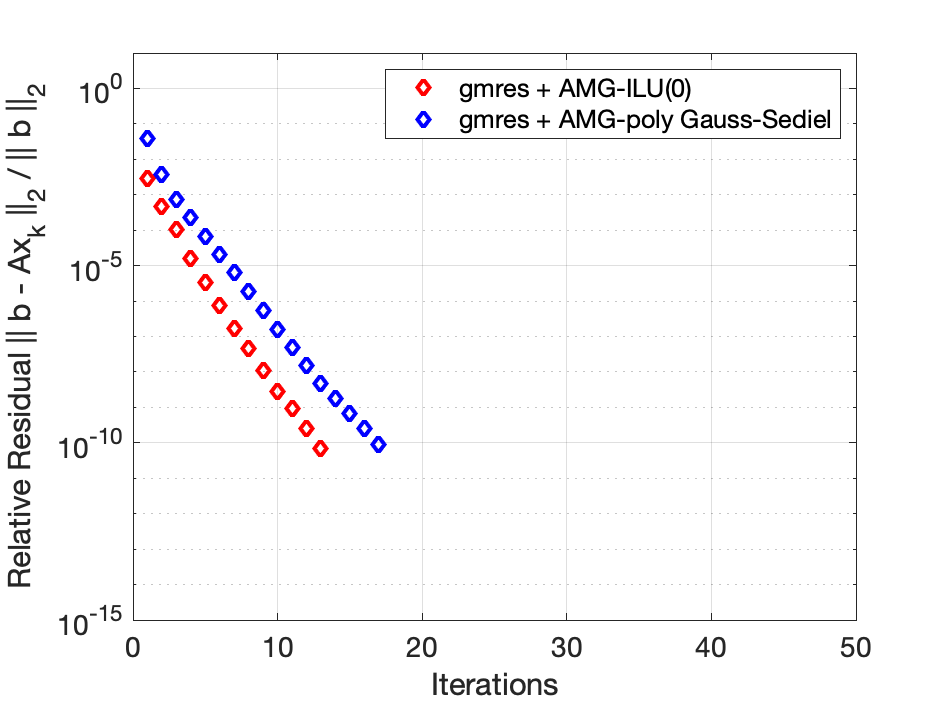}
\caption{\label{fig:14K-hypre} 
PeleLM convergence history of GMRES$+$AMG. Poly Gauss-Seidel and 
ILU$(0)$ smoothers with iterative triangular solves using three (3) Jacobi
iterations.  Matrix dimension $N=1.4$ million.}
\end{figure}

Finally, the compute times of the GMRES$+$AMG solver,
using either direct or iterative 
triangular solvers in the ILU smoother, are compared.
The compute times for a single pressure solve
are given in Table \ref{tab:ilu-smoothers}. The ILU$(0)$
and Gauss-Seidel smoothers are included for comparison. Both the
CPU and GPU times are reported. In all cases, one 
Gauss-Seidel and one ILU sweep are employed. 
The solver time reported corresponds
to when the relative residual has been reduced below 1$e-$5.

\begin{table}[htb]
\centering
\begin{tabular}{|l|c|c|c|c|c|} \hline
 &  Gauss-Seidel & Poly G-S & ILUT direct & ILUT Jacobi & ILU$(0)$ Jacobi \\ \hline
CPU (sec) &  9.2 & 9.6      & 4.3  & 6.9  & 6.8 \\ \hline
GPU (sec) &  4.5 & 0.13     & 0.17 & 0.11 & 0.086 \\ \hline
\end{tabular}
\caption{ \label{tab:ilu-smoothers} GMRES$+$AMG compute time.
Gauss-Seidel, poly Gauss-Seidel, and ILU Smoothers for a linear system from PeleLM with dimension $N=1.4$ million.}
\end{table}

Both the CPU and GPU timing results for the PeleLM nodal projection
pressure solver are compared in Table \ref{tab:ilu-smoothers}.
First, let us consider the CPU compute times for a single solve.
The results indicate that the solver time using the
hybrid $V$-cycle with an ILU smoother on the first level, with a direct 
solver for the $L$ and $U$ factors, is less than when a Gauss-Seidel
smoother is applied on all levels. One sweep of the G-S smoother is
employed in both configurations. The longer time is primarily
due to the higher number of iterations required in this case.
The ILU smoother with iterative triangular solvers is the more
efficient approach on the GPU. Despite only three (3) sparse matrix-vector 
(SpMV) products for the Jacobi iterations to solve the $L$ and $U$ 
triangular systems, the computational speed of the GPU for the
SpMV kernel is more than sufficient to overcome the direct 
sparse triangular solve.

To further explore the parallel strong-scaling behaviour of the iterative
and direct solvers within the ILU smoothers, the GMRES+AMG solver was 
employed to solve a PeleLM linear system of dimensions $N=11$ million.
The $LDU$ form of the incomplete factorization with row scaling was
again employed and ten (10) Jacobi iterations provide sufficient smoothing
for this much larger problem. The convergence histories are displayed in
Figure \ref{fig:11million-cvg}.
The linear system solver was tested on the ORNL Summit Supercomputer
using six NVIDIA Volta V100 GPUs per node, scaling out to 192 nodes,
or 1152 GPUs. Most notably, the solver with iterative Jacobi triangular
solves achieves a ten times faster compute time compared to the direct
triangular solver as displayed in Figure \ref{fig:11million}.

\begin{figure}[h!]
\centering
\includegraphics[width=0.55\textwidth,height=0.275\textheight]{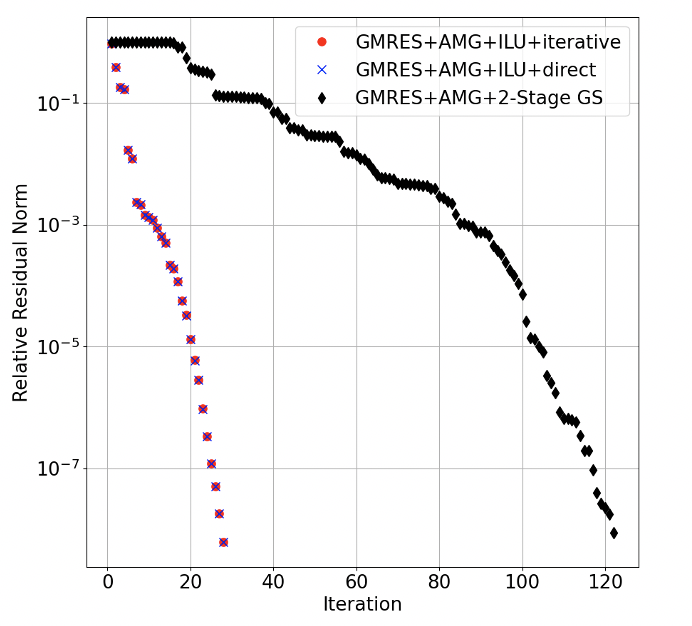}
\caption{\label{fig:11million-cvg} Hypre-BoomerAMG results.
Convergence histories of GMRES+AMG with polynomial two-stage 
Gauss-Seidel and ILU$(0)$ smoothers
using direct and iterative triangular solves for a linear system from PeleLM with dimension $N=11$ million.}
\end{figure}

\begin{figure}[hbt!]
\centering
\includegraphics[width=0.55\textwidth]{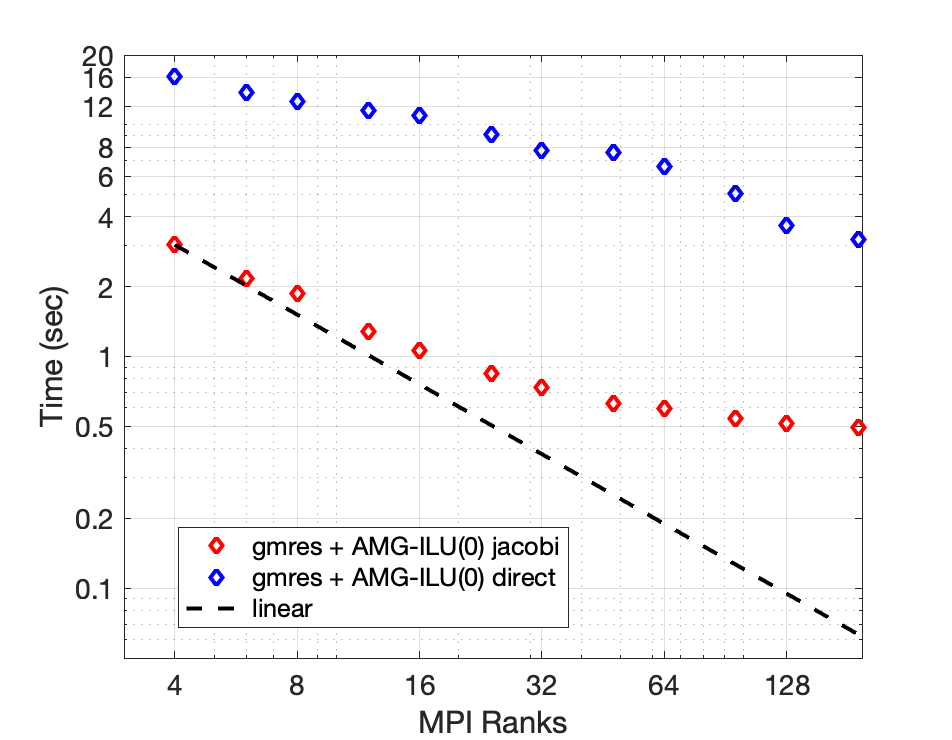}
\caption{\label{fig:11million} Hypre-BoomerAMG strong-scaling results.
GMRES+AMG with ILU$(0)$ smoothers using direct and iterative triangular solves for a linear system from PeleLM with dimension $N=11$ million.}
\end{figure}

\section{Conclusions}

We have shown how Neumann series play
an important role in a low-synch formulation of the MGS-GMRES
algorithm as well as both polynomial Gauss-Seidel and ILU
smoothers for an AMG preconditioner. 
These algorithms are well-suited to GPU
accelerators because they rely on matrix-vector products. 
Indeed, the MGS projection step and the
AMG smoother can both be accelerated by a factor of
25--50$\times$ on current GPU architectures. 
To a large extent, the loss of orthogonality of
the Krylov vectors determines when the solver will
converge to a backward stable solution. The
loss of orthogonality results elucidated by Paige and Strako\v{s}
\cite{Paige2002} imply that the strictly lower triangular 
matrix $L_k$, appearing in the correction matrix $T_k$,
remains small in the Frobenius norm and thus $T_k=I-L_k$ is
sufficient for convergence when 
$\|L_k\|_F^p = {\cal O}(\eps^p)\kappa_F^p(B)$, and $p > 1$.
The Neumann series is a finite sum because $L_k$ is strictly 
lower triangular and nilpotent. 

The iteration matrices for polynomial Gauss-Seidel and
ILU smoothers may be expressed as Neumann series.
When higher-order terms are included, rapid convergence 
is achieved for GMRES-AMG applied to the incompressible
Navier-Stokes equations pressure solver. 
A block-Jacobi type smoother is applied within each parallel process
(MPI rank) by hybrid smoothers for the local block diagonal
matrices. The approximate minimum degree (amd) matrix
re-ordering combined with an $LDU$ factorization leads
to rapid convergence of the Jacobi iterations for
the triangular factors $L$ and $U$ for slightly non-symmetric
linear systems. The Krylov solver was found
to converge faster and take less compute time when AMG
with ILU$(0)$ smoothers and iterated triangular solves
were then employed. We conjecture that re-orderings
which preserve structural symmetry of the matrix also lead 
naturally to a small departure from normality $dep(U)$ for the
triangular factors arising from an $LDU$ factorization.

Pressure continuity problems from the PeleLM adaptive mesh
combustion model was also examined. GMRES-AMG solver convergence
is essentially identical for the standard and
polynomial Gauss-Seidel smoothers. However, the
run-time on a many-core GPU accelerator drops in comparison with 
the conventional Gauss-Seidel smoother and remains
lower even when a Jacobi smoother  
is applied. The PeleLM nodal pressure projection linear systems
were highly ill-conditioned due to the variable density across
the computational domain combined with the mesh geometry. 
A sufficient reduction in the norm-wise relative backward error was
achieved using an ILU smoother with $LDU$ 
factorization and iterative solution of the triangular systems.
Despite the need for ${\cal O}(10)$ Jacobi iterations for these sparse
triangular solves, the sparse matrix-vector product (SpMV)
kernel for the GPU is much faster than the direct triangular
solver, resulting in a ten times faster compute time. 
We plan to explore the fixed-point iteration 
algorithms of Chow and Patel \cite{Chow2015} and also 
Anzt et al. \cite{Anzt2019} to 
compute the ILU$(0)$ and ILUT factorizations in order to reduce 
the C-AMG set-up costs.

\section*{Acknowledgment}
Funding was provided by the Exascale Computing Project (17-SC-20-SC).  The
National Renewable Energy Laboratory is operated by Alliance for Sustainable
Energy, LLC, for the U.S. Department of Energy (DOE) under Contract No.
DE-AC36-08GO28308.
A portion of this research used resources of the Oak Ridge Leadership Computing
Facility, which is a DOE Office of Science User Facility supported under
Contract DE-AC05-00OR22725 and using computational resources at NREL,
sponsored by the DOE Office of Energy Efficiency and
Renewable Energy.

\bibliographystyle{elsarticle-num}
\bibliography{biblio___orthogonalization}

\end{document}